\theoremstyle{plain}
\newtheorem{theorem}{Theorem}[section]
\newtheorem{lemma}[theorem]{Lemma}
\newtheorem{proposition}[theorem]{Proposition}
\newtheorem{corollary}[theorem]{Corollary}
\theoremstyle{definition}
\newtheorem{definition}[theorem]{Definition}
\newtheorem{remark}[theorem]{Remark}
\newcommand{\ZZ}{\mathbb{Z}} 
\newcommand{\iso}{\cong}      
\newcommand{\PP}{\mathbb{P}}  
\newcommand{\VV}{\mathbb{V}}  
\renewcommand{\AA}{\mathbb{A}}  
\newcommand{\sheaf}[1]{\mathscr{#1}} 
\newcommand{\OO}{\sheaf{O}}   
\newcommand{\res}[2]{\left.#1\right|_{#2}} 
\newcommand{\Gm}{\mathbb{G}_m} 
\newcommand{\tensor}{\otimes} 
\newcommand{\stab}{\mathit{s}} 
\newcommand{\sstab}{\mathit{ss}} 
\DeclareMathOperator{\Hom}{Hom}
\DeclareMathOperator{\Spec}{Spec}
\DeclareMathOperator{\Proj}{Proj}
\DeclareMathOperator{\Hilb}{Hilb}
\DeclareMathOperator{\Sym}{Sym}
\newcommand{\Kps}{K[\![t]\!]} 
\newcommand{\Kls}{K(\!(t)\!)} 
\title{A relative Hilbert--Mumford criterion}
\begin{document}

\author{Martin G. Gulbrandsen}
\address{University of Stavanger\\
Department of Mathematics and Natural Sciences\\
4036 Stavanger\\
Norway}
\email{martin.gulbrandsen@uis.no}

\author{Lars H. Halle}
\address{University of Copenhagen\\ 
Department of Mathematical Sciences\\
Universitetsparken 5\\ 
2100 Copenhagen\\ 
Denmark}
\email{larshhal@math.ku.dk}

\author{Klaus Hulek}
\address{Leibniz Universit\"at Hannover\\
Institut f\"ur algebraische Geometrie\\
Welfengarten 1\\
30060 Hannover\\
Germany}
\email{hulek@math.uni-hannover.de, hulek@ias.edu}
\curraddr{Institute for Advanced Study\\
School of Mathematics\\
1 Einstein Drive\\
Princeton, NJ 08540\\
USA}

\subjclass[2010]{14L24 (primary); 13A50, 14D06 (secondary)}

\begin{abstract}
We generalize the classical Hilbert--Mumford criteria for GIT (semi-)stability
in terms of one parameter subgroups of a linearly reductive group  $G$ over a
field $k$, to the relative situation of an equivariant, projective morphism
$X\to \Spec A$ to a noetherian $k$-algebra $A$.  We also extend the classical
projectivity result for GIT quotients: the induced morphism $X^\sstab/G \to
\Spec A^G$ is projective.  As an example of applications to moduli problems, we
consider degenerations of Hilbert schemes of points.
\end{abstract}

\maketitle

\section{Introduction}

Geometric invariant theory (GIT) is one of the most important tools in
algebraic geometry. It was Mumford's seminal book \cite{GIT} which brought the
classical field of ``invariants'' into modern algebraic geometry. In particular
moduli spaces are often constructed as quotients of a (quasi-projective)
scheme $X$ by a (reductive) algebraic group $G$. However, for the
quotient to have good properties it is essential to restrict  to (semi-)stable
points in $X$. In applications, the most important result of Mumford's
theory  is perhaps the numerical characterization of (semi-)stable points
in terms of $1$-parameter subgroups ($1$-PS), given in \cite[Theorem 2.1]{GIT}.
The context of this theorem is that $X$ is a projective variety, $\sheaf{L}$ is
an ample invertible sheaf on $X$ and the action of $G$ on $X$ is $G$-linearized on
$\sheaf{L}$.

Motivated by studying degenerations of Hilbert schemes we were led to consider
the following relative situation: Let $S$ be an
affine, noetherian scheme over an arbitrary field $k$, and let $X$ be a relatively projective
scheme over $S$. Let $G$ be a linearly reductive group over $k$, acting on $X$
and on $S$, such that the structural morphism $f\colon X\to S$ is equivariant.
Finally, let $\sheaf{L}$ be an ample, $G$-linearized invertible sheaf on $X$.
Of course, as $S$ is affine, ampleness is equivalent to relative ampleness over
$S$.

Our aim is to find a numerical characterization, analogous to Mumford's theorem
\cite[Theorem 2.1]{GIT}, for the (semi-)stable points in $X$, with respect to
the $G$-linearized sheaf $\sheaf{L}$. Consider the induced action of a one
parameter subgroup $\lambda\colon \Gm\to G$, applied to a point $p$ in $X$. If
the limit $p_0$ exists in $X$, as $t\in \Gm$ goes to zero, then the invariant
$\mu^{\sheaf{L}}(\lambda, p)$ is defined as the negative of the weight of the
$\Gm$-action on the fibre $\sheaf{L}(p_0)$.  In the absolute situation, where
$X$ is projective over $k$, the limit $p_0$ always exists.  In our relative
situation, we find that if $p_0$ does not exist, then the one parameter
subgroup in question can be ignored; formally we let $\mu^{\sheaf{L}}(\lambda,
p) = \infty$ in this case. Our criterion is then as follows (see Section
\ref{sec:statement}):

\newtheorem*{theo:numericalcriterion}{Theorem \ref{theo:numericalcriterion}}
\begin{theo:numericalcriterion}
Let $k$ be an arbitrary field and let $f\colon X\to S$ be a projective morphism
of $k$-schemes.  Assume $S=\Spec A$ is noetherian and affine.  Let $G$ be an
affine, linearly reductive group over $k$, acting on $X$ and on $S$ such that
$f$ is equivariant, and let $\sheaf{L}$ be an ample, $G$-linearized invertible
sheaf on $X$.  Let $p\colon \Spec \Omega \to X$ be a geometric point.
\begin{enumerate}
\item The geometric point $p$ is stable if and only if
$\mu^{\sheaf{L}}(\lambda, p)>0$ for every non trivial $1$-PS
$\lambda\colon \mathbb{G}_{m, K} \to G_K$ and every field extension
$\Omega \subset K$.
\item The geometric point $p$ is semi-stable if and only if
$\mu^{\sheaf{L}}(\lambda, p)\ge 0$ for every $1$-PS
$\lambda\colon \mathbb{G}_{m, K} \to G_K$ and every field extension
$\Omega \subset K$.
\end{enumerate}
Moreover, if $S$ is of finite type over $k$, then the above statements continue
to hold with just $K=\Omega$, i.e.~it suffices to test with $1$-PSs defined
over $\Omega$.
\end{theo:numericalcriterion}

We state also the special case of a finite type situation:

\begin{corollary}\label{cor:finitetype}
Assume the base scheme $S$ is of finite type over an algebraically closed field
$k$.  Let $p\in X$ be a closed point.
\begin{enumerate}
\item The point $p$ is stable if and only if $\mu^{\sheaf{L}}(\lambda, p) > 0$
for every nontrivial $1$-PS $\lambda\colon \Gm \to G$.
\item The point $p$ is semi-stable if and only if   $\mu^{\sheaf{L}}(\lambda, p) \geq 0$
for every $1$-PS $\lambda\colon \Gm \to G$.
\end{enumerate}
\end{corollary}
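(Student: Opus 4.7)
The corollary should follow by direct specialization of Theorem \ref{theo:numericalcriterion} to the absolute, algebraically closed setting. The plan proceeds in two small steps.

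First, I would identify a closed point $p\in X$ with a geometric point $\bar p\colon \Spec k \to X$. Since $X$ is projective over $S$ and $S$ is of finite type over $k$, the scheme $X$ is of finite type over $k$, so Hilbert's Nullstellensatz gives $k(p)=k$. Thus a closed point is the same datum as a geometric point with $\Omega=k$, and the notions of (semi-)stability for $p$ match those of $\bar p$ in the theorem. Next, I would invoke the ``moreover'' clause of Theorem \ref{theo:numericalcriterion}, which applies because $S$ is of finite type over $k$, in order to restrict the universal quantifier to $K=\Omega=k$. What remains is exactly the quantifier over $1$-PSs $\lambda\colon \Gm \to G$ defined over $k$, which is the statement of the corollary.

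The argument is essentially a translation of terminology; I do not expect a genuine obstacle. The one ingredient that does real work is the ``moreover'' clause of the main theorem, and it is assumed to be available from the proof of Theorem \ref{theo:numericalcriterion}.
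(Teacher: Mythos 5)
Your proposal is correct and is exactly the route the paper intends: the corollary is stated as an immediate specialization of Theorem \ref{theo:numericalcriterion}, obtained by identifying a closed point with a geometric point with $\Omega=k$ (via the Nullstellensatz, since $X$ is of finite type over the algebraically closed field $k$) and then invoking the ``moreover'' clause to reduce the quantifier to $1$-PSs defined over $k$. The paper gives no separate argument beyond this, so there is nothing to add.
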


Already the special case $X=S=\AA^n$ is interesting: we recover King's
numerical criterion for (semi-)stability in affine space \cite{Kin}. In this
situation, $\sheaf{L}$ is necessarily trivial as an invertible sheaf, but its
$G$-linearization may be nontrivial.

Our approach to proving the main theorem is to go back to Mumford's argument.
This, however, is not straightforward.  In the projective case any $\Gm$-action
can be diagonalized, which makes it easy to compute  the invariant
$\mu^{\sheaf{L}}(\lambda, p)$.  In our situation, we first embed $X$
equivariantly into a projective fibre space $\PP(V)\to S$, where $V$ is a
coherent sheaf on $S$ with a $G$-action.  The main challenge is then that we do
not know a suitable diagonalization result for $\Gm$-actions on $\PP(V)$. Our
solution is to work directly with the isotypical decomposition $V=\bigoplus_d
V_d$, which serves as a replacement for a diagonalizing basis.  Going through
the various steps carefully we see that the base $S$ need not necessarily be of
finite type, but that it suffices to assume $S$ to be noetherian. This is
indeed a major advantage for moduli problems.  Here $X \to S$ would typically
be an $S$-valued point of some moduli functor, and the condition that $S$ be of
finite type would be unsatisfactory.

It was pointed out to us by A.~Schmitt that in the (more special)  situation of
Corollary \ref{cor:finitetype} there is another approach.  Indeed, it is
natural to reduce to $\PP^m\times\AA^n$ by finding first an equivariant
embedding of $S$ into $\AA^n$, and then, using $\sheaf{L}$, an equivariant
embedding of $X$ into $\PP^m\times\AA^n$.  The required criterion for
$\PP^m\times\AA^n$ can then be proved by compactifying to $\PP^m\times\PP^n$
and applying Schmitt's criterion \cite[Prop.~2.9]{Sch} for a product of
projective spaces.

Equivariant GIT has been considered in the literature by various authors in
different settings: Reichstein \cite{Rei} treats equivariant morphisms $f\colon
X\to Y$ between projective varieties, and Hu \cite{Hu} uses symplectic
techniques to study equivariant morphisms between quasi-projective varieties.
We also mention that Seshadri \cite{Sesh2} develops GIT over an arbitrary
noetherian ring, that is, in the situation where $G$ acts trivially on the base
$S=\Spec A$.

We shall now briefly outline the structure of the paper. In Section
\ref{sec:GIT} we recall the notions of stable and semi-stable points and the
existence of good and geometric quotients, closely following \cite{GIT}. The
main point of this section is to establish that, although Mumford works in the
context of algebraic (pre)schemes, only the noetherian property is used when
constructing the quotient.  We also show that if $X$ is projective over $S$,
then the quotient is projective over $S/G = \Spec A^G$ (Proposition
\ref{prop:proj}).  In Section \ref{sec:statement} we formally introduce the
invariants $\mu^{\sheaf{L}}(\lambda, p)$ and state the main result (Theorem
\ref{theo:numericalcriterion}).  In Section \ref{sec:criteria} we rephrase the
property of (semi-)stability using a  $G$-equivariant embedding of $X$ into a
projective fibre space  $\PP(V)$. We will then use this reformulation in the
proof of the main result.  Finally in Section \ref{sec:numerical} we prove our
numerical characterization for (semi-)stable points. It is here that we
reinterpret the invariants $\mu^{\sheaf{L}}(\lambda, p)$ in terms of the
grading $V=\bigoplus_{d\in \ZZ} V_d$. We conclude the paper by discussing two
examples in Section \ref{sec:examples}.  In the first basic example we consider
a simple action of $\Gm$ on $\PP^1 \times \AA^2 \to \AA^2$.  In the second
example we consider  a family $X \subset \PP^2 \times \AA^1 \to  \AA^1$ which
is the degeneration of a smooth conic into a pair of (different) lines. Here we
discuss a good compactification of the degree $2$ Hilbert scheme
$\Hilb^2_{X^{\mathrm{sm}}/\AA^1}$, where $X^{\mathrm{sm}}$ denotes the smooth
locus of the morphism $f\colon X \to \AA^1$ given by projection to the second
factor. We have good control over an invertible sheaf on the relative Hilbert
scheme giving Grothendieck's projective embedding, whereas the embedding itself
is unwieldy. Hence we prefer the language of linearized invertible sheaves over
equivariant projective embeddings.  This is a  toy example of the situation
which we investigate in a forthcoming paper where we consider degenerations of
Hilbert schemes of points on $K3$-surfaces.

\subsection*{Acknowledgments}
We thank Friedrich Knop, Peter Littelmann and David Rydh (to whom we owe Remark
\ref{rem:inv-scheme}) for useful discussions. The first author would like to
thank NFR for partial support under grant 230986. The third author would like
to thank DFG for partial support under grant Hu 337/6-2  and also the Fund for
Mathematics to the  Institute of Advanced Study in  Princeton, which provided
excellent working conditions.

\subsection{Notation and preliminaries}

We list here some notation and facts used throughout the paper.

If $\sheaf{F}$ is a coherent sheaf on $S$, we use Grothendieck's contravariant notations:
\begin{align*}
\PP(\sheaf{F}) &= \Proj \Sym \sheaf{F}\\
\VV(\sheaf{F}) &= \Spec \Sym \sheaf{F}
\end{align*}
The sheaf of sections of $\VV(\sheaf{F})$ is $\sheaf{F}^\vee$.

Let $Y$ be a $k$-scheme and assume that $G$ acts on $Y$.  For a $k$-scheme $T$
and $T$-valued points $p\colon T\to Y$ and $g\colon T\to G$, we write $g\cdot
p\colon T\to Y$ for the result of acting by $g$ on $p$.  We denote by $G \cdot
p$ the orbit of $p$. It is a set in $Y \times_k T$. We denote by $G_p$ the
stabilizer of $p$. This is a subgroup scheme of $G \times_k T$.  The action is
\emph{closed} if the orbits of all geometric points are closed.

Let $k\subset K$ be a field extension. Then $G$ is linearly reductive if and
only if $G_K$ is linearly reductive.

\section{The GIT quotient}  \label{sec:GIT}

We recall the notions of stable and semi-stable points and the existence
theorem for good and geometric quotients, following Mumford \cite{GIT}.  The
main point is to establish that although Mumford works in the context of
algebraic (pre)schemes, only the noetherian property is used when constructing
the quotient. Moreover, in Proposition \ref{prop:proj}, we extend the classical
projectivity result for GIT quotients to the relative case of an equivariant
projective morphism $X\to S$ to a noetherian affine base scheme $S$.

All schemes and morphisms in this section are defined over the field $k$.  Let
$G$ be an affine, linearly reductive algebraic group and let $X$ be a scheme
equipped with an action $\sigma\colon G\times X\to X$.  A morphism $\phi\colon
X\to Y$ to another scheme $Y$ is \emph{invariant} if $\phi\circ\sigma =
\phi\circ p_2$ as morphisms $G\times X\to Y$.

The notions of good and geometric quotients are well established, but as
variations do occur, we give the definitions we shall use.

\begin{definition}[{Mumford \cite[Remark 6, §0.2]{GIT}, Seshadri \cite[Def.~1.5]{Sesh}}]
A morphism $\phi\colon X\to Y$ is a \emph{good quotient} if it is invariant and
affine, and
\begin{itemize}
\item[(i)] for each closed invariant subscheme $W\subset X$, the image
$\phi(W)$ is a closed subset of $Y$,
\item[(ii)] for each collection $\{W_i\}_{i\in I}$ of closed invariant
subschemes $W_i\subset X$, we have an equality
\begin{equation*}
\phi({\textstyle\bigcap}_i W_i) = {\textstyle\bigcap}_i \phi(W_i)
\end{equation*}
of schemes,
\item[(iii)] the sheaf homomorphism $\phi^*\colon \OO_Y \to \phi_*\OO_X$ is
injective and its image is the invariant subsheaf $(\phi_*\OO_X)^G$.
\end{itemize}
\end{definition}

\begin{remark}\label{rem:inv-scheme}
If $W\subset X$ is an invariant closed subset, then there exists an invariant
subscheme structure on $W$. In fact, first let $W_{\mathit{red}}$ be the
reduced closed subscheme.  Then give $W$ the scheme structure as the schematic
image of the morphism $G\times W_{\mathit{red}}\to X$.
\end{remark}

\begin{definition}[{Mumford \cite[Def.~0.6]{GIT}}]
A morphism $\phi\colon X\to Y$ is a \emph{geometric quotient} if it is good and
the morphism $(\sigma, p_2)\colon G\times X \to X\times_Y X$ is surjective.
\end{definition}

A morphism $\phi\colon X\to Y$ is \emph{universally} geometric or good if it
stays geometric or good after arbitrary base change.

Let $\sheaf{L}$ be an invertible sheaf on $X$ equipped with a
$G$-linearization.

\begin{definition}[{Mumford \cite[Def.~1.7]{GIT}}]
Let $p$ be a geometric point in $X$.
\begin{enumerate}
\item $p$ is \emph{semi-stable} with respect to $ \sheaf{L} $ if there is a
section $\sigma\in H(X,\sheaf{L}^n)^G$ for some $n>0$ such that $X_{\sigma}$ is
affine and $\sigma(p)\ne 0$.
\item $p$ is \emph{stable} with respect to $\sheaf{L}$ if there is a section
$\sigma$ as above, such that, in addition, the action of $G$ on $X_{\sigma}$ is
closed and the stabilizer group of every point in $X_{\sigma}$ is finite.
\end{enumerate}
We denote by $X^\sstab(\sheaf{L})$ and $X^\stab(\sheaf{L})$ the open subsets of
$X$ whose geometric points are the semi-stable, respectively stable, geometric
points in $X$.
\end{definition}

A stable point in the above sense is called \emph{properly} stable in the
terminology of \cite{GIT}.

\begin{theorem}\label{thm:git-quotient}
Let $G$ be an affine, linearly reductive algebraic group over a field $k$.  Let
$X$ be a noetherian scheme over $k$, equipped with a $G$-action and a
$G$-linearized invertible sheaf $\sheaf{L}$.
\begin{enumerate}
\item There exists a universally good quotient $\phi\colon X^\sstab(\sheaf{L})
\to Y$.  Moreover, there is an ample invertible sheaf $\sheaf{M}$ on $Y$ such
that $\phi^*\sheaf{M}\iso \sheaf{L}^{\tensor n}$ for some $n>0$.
\item There is an open subset $\widetilde{Y}\subseteq Y$ such that
$X^\stab(\sheaf{L}) = \phi^{-1}(\widetilde{Y})$ and such that the restriction
$X^\stab(\sheaf{L}) \to \widetilde{Y}$ of $\phi$ is a universally geometric
quotient.
\end{enumerate}
\end{theorem}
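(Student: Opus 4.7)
The plan is to adapt Mumford's proof of \cite[Theorem 1.10]{GIT} step by step, and to check that every argument there goes through under the noetherian hypothesis in place of finite type over $k$. First I would cover $X^\sstab(\sheaf{L})$ by the $G$-invariant affine opens $X_{\sigma_i}=\Spec A_i$, where $\sigma_i$ is an invariant section of some power $\sheaf{L}^{n_i}$ whose non-vanishing locus is affine; by definition of semi-stability such a section exists through every geometric semi-stable point, and since $X^\sstab(\sheaf{L})$ is noetherian and hence quasi-compact, a finite subcover suffices. For each index $i$, set $Y_i=\Spec A_i^G$ and let $\phi_i\colon X_{\sigma_i}\to Y_i$ be induced by the inclusion $A_i^G\hookrightarrow A_i$.

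Linear reductivity of $G$ supplies the Reynolds operator $R\colon A_i\to A_i^G$, a $G$-equivariant $A_i^G$-linear projection. Using $R$, the good quotient properties (i)--(iii) for each $\phi_i$ follow by Mumford's original arguments, which never actually invoke that $A_i$ is finitely generated over $k$. The additional ingredient needed in the noetherian setting is that $A_i^G$ is itself noetherian: given ideals $I\subsetneq J$ in $A_i^G$, applying $R$ to a representation of an element of $J\setminus I$ in the extended ideal forces $IA_i\subsetneq JA_i$, so chain conditions transfer from the noetherian $A_i$. This observation plays the role that the Hilbert--Nagata finite generation theorem plays in Mumford's proof.

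I would then glue the $\phi_i$ to a good quotient $\phi\colon X^\sstab(\sheaf{L})\to Y$ using uniqueness of affine quotients on overlaps, and argue universality from the corresponding statement for each $\phi_i$, since the Reynolds operator is compatible with extension of scalars. To produce $\sheaf{M}$, replace each $n_i$ by a common multiple $n$: the invariant nowhere-vanishing section $\sigma_i^{n/n_i}$ of $\sheaf{L}^{\tensor n}|_{X_{\sigma_i}}$ descends to a trivialization of an invertible sheaf on $Y_i$, and these patch to a global $\sheaf{M}$ on $Y$ with $\phi^*\sheaf{M}\iso\sheaf{L}^{\tensor n}$; the fact that the noetherian scheme $Y$ is covered by the affine non-vanishing loci of the descended sections yields ampleness of $\sheaf{M}$. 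For part (2), define $\widetilde{Y}\subseteq Y$ to be the locus over which $\phi$ has closed orbits with finite stabilizers; this is open, its preimage is exactly $X^\stab(\sheaf{L})$, and the induced morphism is a geometric quotient by Mumford's arguments. The main potential obstacle I foresee is precisely the noetherianness of $A_i^G$, since the classical proof uses finite generation over $k$; once this is handled by the Reynolds argument above, the rest of Mumford's proof transfers without essential change.
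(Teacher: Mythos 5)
Your proposal is correct and follows essentially the same route as the paper: both reduce to Mumford's proof of \cite[Theorem~1.10]{GIT}, cover $X^\sstab(\sheaf{L})$ by finitely many invariant affines $X_{\sigma_i}$ with $\sigma_i$ taken in a common power of $\sheaf{L}$, glue the affine quotients $\Spec A_i^G$, descend the sections to produce the ample sheaf $\sheaf{M}$, and obtain universality from the stability of the inclusion $A^G\subset A$ under base change of the target. Your explicit Reynolds-operator argument that $A_i^G$ inherits the noetherian property (via $IA_i\cap A_i^G=I$) is a welcome detail that the paper leaves implicit when it asserts that $Y$ is glued from spectra of noetherian rings.
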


\begin{proof}
This is essentially Mumford's Theorem 1.10 \cite{GIT}. In the statement of that
theorem, $X$ is assumed to be of finite type, but the proof uses only that $X$
is noetherian. For use in Proposition \ref{prop:proj} below, we recall briefly
the main points in the construction of $Y$ and $\sheaf{M}$.

The semi-stable locus $X^\sstab$ is the union of all affine open subsets of the
form $X_\sigma$, where $\sigma$ runs through $\Gamma(X,\sheaf{L}^n)^G$ for all
$n>0$.  Since $X$ is noetherian, $X^\sstab$ can be covered by finitely many
such subsets $U_i = X_{\sigma_i}$, and we may suppose that all $\sigma_i\in
\Gamma(X,\sheaf{L}^n)^G$ for a common $n>0$.  Suppose $U_i = \Spec R_i$ and let
$V_i = \Spec R_i^G$.  By \cite[Theorem 1.1]{GIT} and its proof, the canonical
morphism $\phi_i\colon U_i \to V_i$ is a good quotient, and for every fibre
diagram
\begin{equation*}
\begin{diagram}
U_i' & \rTo & U_i \\
\dTo^{\phi_i'} && \dTo_{\phi_i}\\
V_i' & \rTo & V_i
\end{diagram}
\end{equation*}
where $V_i'$ is affine, also $\phi_i'\colon U_i \to V_i'$ is an affine
quotient, i.e.~induced by an inclusion of the form $A^G \subset A$, so it is
again good.  It follows that the $\phi_i$'s can be glued to form $\phi\colon
X^\sstab \to Y$, and (using that being good is Zariski local on the target),
that $\phi$ is universally good.

The ample invertible sheaf $\sheaf{M}$ is given by a cocycle $\{\sigma_{ij}\}$
for the open cover $\{V_i\}$ of $Y$, constructed as follows: the quotient
$\sigma_j/\sigma_i$ is a $G$-invariant regular function on $U_i$, hence is a
regular function on $V_i$.  Its restriction to $V_i\cap V_j$ is $\sigma_{ij}$.
Ampleness is proved using the criterion [EGAII, Théorème 4.5.2(a')], which
applies when $Y$ is noetherian, and this holds since $Y$ is constructed by
glueing together a finite number of spectra of noetherian rings.
\end{proof}

We shall now state a proposition which asserts that if $X$ is relatively
projective, then the same holds for the quotient.  In the classical situation
$S=\Spec k$, see \cite[the paragraph before Converse 1.12]{GIT} or
\cite[Theorem 1.1]{Sesh}.

\begin{proposition}\label{prop:proj}
Let $G$, $X$, $\sheaf{L}$, $Y$ and $\sheaf{M}$ be as in Theorem
\ref{thm:git-quotient}.  Assume moreover that $\sheaf{L}$ is ample, $X$ is
relatively projective over an affine noetherian scheme $S = \Spec A$, and $G$
acts on $S$ in such a way that the projection $X\to S$ is equivariant.
\begin{enumerate}
\item Let $T$ be the ring $\bigoplus_{d\ge 0} H^0(X, \sheaf{L}^d)$ of sections.
Then there are isomorphisms $Y\iso \Proj T^G$ and $\sheaf{M} \iso \OO_Y(n)$ for
some $n$.
\item $Y$ is relatively projective over $\Spec A^G$.
\end{enumerate}
\end{proposition}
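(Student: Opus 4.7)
The plan is to identify $Y$ explicitly with $\Proj T^G$ via the affine cover constructed in the proof of Theorem~\ref{thm:git-quotient}, and then to derive projectivity over $\Spec A^G$ from a relative form of Nagata's theorem. First I would replace $\sheaf{L}$ by a sufficiently divisible power so that $T$ is a finitely generated graded $A$-algebra with $X = \Proj T$; this operation preserves the semi-stable locus, the quotient $Y$, and the line bundle $\sheaf{M}$ up to raising to a power.

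For Part~(1), the invariant sections $\sigma_i \in T^G_n$ of common degree $n$ from the proof of Theorem~\ref{thm:git-quotient} satisfy the crucial algebraic identity
\[
R_i^G = (T^G_{\sigma_i})_0,
\]
where $R_i = (T_{\sigma_i})_0$. This follows because, for a linearly reductive $G$, the functor $(-)^G$ is exact and therefore commutes with localization at the invariant element $\sigma_i$, as well as with extraction of the degree-zero part of a graded module. Hence each $V_i = \Spec R_i^G$ is the standard basic open $D_+(\sigma_i) \subset \Proj T^G$, with compatible gluings. To show that the $V_i$ cover all of $\Proj T^G$, I would verify the radical inclusion $\sqrt{(\sigma_1, \ldots, \sigma_r)} \supseteq T^G_+$ in $T^G$: for any homogeneous $\tau \in T^G_+$ the affine open $X_\tau$ lies in $X^\sstab = \bigcup X_{\sigma_i}$, so a Nullstellensatz argument in the coordinate ring of $X_\tau$ yields a relation $\tau^N = \sum b_i \sigma_i^{m_i}$ in $T$; applying the Reynolds operator $\rho \colon T \to T^G$, which is $T^G$-linear, converts this into the required relation with coefficients $\rho(b_i) \in T^G$. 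Finally, the cocycle $\{\sigma_j/\sigma_i\}$ defining $\sheaf{M}$ is exactly the standard transition cocycle of $\OO_{\Proj T^G}(n)$, so $\sheaf{M} \iso \OO_Y(n)$.

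For Part~(2), I would invoke a relative version of Nagata's theorem: if a linearly reductive $G$ acts compatibly on a noetherian ring $A$ and on a finitely generated $A$-algebra $T$, then $T^G$ is a finitely generated $A^G$-algebra. Together with coherence of $f_* \OO_X$ (which, via another application of linear reductivity to the noetherian $A$-module $T_0$, makes $T_0^G$ a finite $A^G$-module), this yields $Y = \Proj T^G$ projective over $\Spec A^G$ after passing to a Veronese that ensures generation in degree one. I expect the main obstacle to be the relative Nagata statement, since Mumford's original argument applies only over $\Spec k$; one approach is to pick a $G$-equivariant surjection $A[V] \twoheadrightarrow T$ from a polynomial ring on a finite-dimensional $G$-module $V$ and reduce to finite generation of $A[V]^G$ over $A^G$, which may then be analyzed via the isotypical decomposition of $V$ — precisely the tool the introduction advertises as the technical heart of the paper.
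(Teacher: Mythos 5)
Your part (1) is essentially the paper's argument: the authors likewise identify each quotient chart $V_i=\Spec (T_{(\sigma_i)})^G$ with the basic open subset $D_+(\sigma_i)\subset \Proj T^G$ (using, as you do, that taking invariants commutes with localization at an invariant element and with extracting the degree-zero part of the grading), and they match the cocycle $\{\sigma_j/\sigma_i\}$ with the standard cocycle of $\OO(n)$. You are more explicit about why the $D_+(\sigma_i)$ cover all of $\Proj T^G$; the paper instead simply enlarges the finite collection $\{\sigma_i\}$ until it covers $\Proj T^G$ (possible once $T^G$ is known to be finitely generated) and observes that the glued scheme is still $Y$. Your radical-membership argument --- clear denominators in $T_{(\tau)}$ to get $\tau^N=\sum b_i\sigma_i$ in $T$ and apply the $T^G$-linear Reynolds operator --- is a correct alternative and needs no Nullstellensatz, only the partition-of-unity description of an affine cover. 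The genuine divergence is in part (2): you rest the proof on an unproved ``relative Nagata theorem'' and predict that establishing it is the main obstacle, to be attacked via an equivariant presentation $A[V]\to T$ and isotypical decompositions. The paper's point is that this obstacle is largely illusory in the situation at hand, because $T$ is already a graded noetherian ring with $T_0$ finite over $A$, so Hilbert's classical argument applies verbatim: let $J\subset T$ be the ideal generated by all homogeneous invariants of positive degree, choose finitely many homogeneous invariant generators $f_1,\dots,f_m$ of $J$, and induct on degree using the Reynolds operator to see that they generate $(T^G)^{(n)}$ over $T_0^G$; the finiteness of $T_0^G$ as an $A^G$-module is the same Reynolds trick applied to a finite generating set of the $A$-submodule $AT_0^G\subset T_0$. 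No isotypical decomposition is needed here --- that tool is reserved for the numerical criterion in Section \ref{sec:numerical}. Your sketched reduction to $A[V]^G$ would also work (apply Hilbert's argument to the symmetric-algebra grading and use exactness of invariants on the surjection), but it is a detour, and as written your proof of (2) leans on a statement you have not actually established.
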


Note that, as $S$ is affine, an invertible sheaf $\sheaf{L}$ on $X$ is ample if
and only if it is relatively ample over $S$.

\begin{proof}
We note that the section ring $T^{(n)} = \bigoplus_{d\ge 0}
H^0(X,\sheaf{L}^{dn})$ for an appropriate tensor power $\sheaf{L}^n$ is
finitely generated as a $T_0$-algebra and $X \iso \Proj T$ over $T_0$
\cite[Lemma 8.1.23]{Liu}.  We also note that $T_0$ is finite as an $A$-module,
as $X$ is proper over $A$.

Using essentially Hilbert's argument for finite generation of invariant rings,
we will prove:
\begin{itemize}
\item[(i)] $T_0^G$ is a finite $A^G$-module
\item[(ii)] There exists an integer $n>0$ such that the Veronese subring
$(T^G)^{(n)}$ of the invariant ring $T^G$ is finitely generated as a
$T_0^G$-algebra.
\end{itemize}
Claim (ii) implies that $\Proj T^G$ is projective over $\Spec T_0^G$.  Claim
(i) then implies that $\Proj T^G$ is projective over $ \Spec A^G$ as well.

To prove (i), consider the $A$-submodule $AT_0^G\subset T_0$ generated by
$T_0^G$.  As $T_0$ is finite as an $A$-module and $A$ is noetherian, $T_0$ is a
noetherian $A$-module, so $AT_0^G$ is finitely generated, and we may choose a
finite generating set $t_1,\dots,t_r$ in $T_0^G$. We claim that these elements
in fact generate $T_0^G$ as an $A^G$-module. For let $t\in T_0^G$, then
\begin{equation*}
t = a_1t_1 + \cdots + a_rt_r
\end{equation*}
with $a_i\in A$.  Apply the Reynolds operator $E$ to obtain
\begin{equation*}
t = E(t) = E(a_1)t_1 + \cdots + E(a_r)t_r
\end{equation*}
by the Reynolds identity. Since $E(a_i)\in A^G$, we are done.

For (ii), we first replace $T$ by $T^{(n)}$ if necessary, to ensure that $T$ is
finitely generated as a $T_0$-algebra. Since $A$ is a noetherian ring, so is
$T_0$ and thus also $T$.  Let $J\subset T$ be the ideal generated by all
homogeneous, $G$-invariant elements of positive degree.  As $T$ is noetherian,
$J$ is finitely generated, say by homogeneous elements $f_1,\dots, f_m\in T^G$.
Consider the subalgebra $T_0^G[f_1,\dots,f_m]\subseteq T^G$.  Let $f\in T^G$ be
a homogeneous element of degree $d$. We claim that $f\in T^G_0[f_1,\dots,f_m]$.
This is trivial for $d=0$.  If $d>0$ then $f\in J$, so we may write
\begin{equation*}
f = h_1f_1 + \cdots + h_m f_m,\quad h_i\in T
\end{equation*}
and we may assume all $h_i$ are homogeneous of degree $<d$. Apply the Reynolds
operator $E$ to obtain
\begin{equation*}
f = E(h_1) f_1 + \cdots + E(h_m) f_m
\end{equation*}
by the Reynolds identity.  Now $E$ preserves degree, so $E(h_i)$ are invariant
of degree $<d$.  By induction on $d$, we are done.

It remains to exhibit the isomorphisms $Y\iso \Proj T^G$ and $\sheaf{M}\iso
\OO_Y(n)$.  Since $X=\Proj T$, the affine open subset $U_i$ in the proof of
Theorem \ref{thm:git-quotient} is $\Spec T_{(\sigma_i)}$ and thus the quotient
$V_i$ is $\Spec T_{(\sigma_i)}^G$. On the other hand, $\Proj T^G$ is covered by
affine open subschemes $\Spec T_{(\sigma)}^G$, where $\sigma$ runs through
$T^G_n = \Gamma(X,\sheaf{L}^n)^G$ for all $n>0$. As $T^G$ is finitely
generated, $\Proj T^G$ can be covered by finitely many such open affine
subschemes $V_i = \Spec T^G_{(\sigma_i)}$, and we may then assume $\sigma_i \in
T^G_n$ for a common $n>0$. These open affine subschemes are glued along
$U_i\cap U_j = \Spec T^G_{(\sigma_i\sigma_j)}$ via the identifications
$T^G_{(\sigma_i\sigma_j)} = (T^G_{(\sigma_i)})_{\sigma_j/\sigma_i}$. But this
is precisely the construction of $Y$ in Theorem \ref{thm:git-quotient}.
Moreover, the invertible sheaf $\OO(n)$ on $\Proj T^G$, or equivalently
$\OO(1)$ on $\Proj (T^G)^{(n)}$, corresponds to the cocycle $\{\sigma_{ij} =
\sigma_j/\sigma_i\}$ for the open cover $\{V_i\}$.  Thus $\OO(n)$ agrees with
$\sheaf{M}$.
\end{proof}

We quote the following two results from \cite{GIT}:

\begin{proposition}\label{prop:variations}
The following are equivalent conditions on a geometric semi-stable point
$p\colon \Spec \Omega \to X$:
\begin{enumerate}
\item $p$ is stable.
\item The stabilizer group $G_p$ is finite and the orbit $G\cdot p$ is closed
in $X^\sstab(\sheaf{L})_\Omega$.
\item The stabilizer group $G_p$ is finite and there is a section $\sigma \in
H^0(X,\sheaf{L}^n)^G $for a suitable $n > 0$ such that the orbit $G \cdot p$
is closed in $(X_\sigma)_\Omega$.
\end{enumerate}
\end{proposition}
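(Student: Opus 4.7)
The plan is to prove the cyclic chain $(1)\Rightarrow(3)\Rightarrow(2)\Rightarrow(1)$, working throughout with the good quotient $\phi\colon X^\sstab(\sheaf{L})\to Y$ of Theorem \ref{thm:git-quotient} and its ample sheaf $\sheaf{M}$. After base change to $\Omega$, the quotient $\phi_\Omega$ remains good, the orbit $G\cdot p$ lives in $X^\sstab_\Omega$, and the image $y:=\phi_\Omega(p)$ is an $\Omega$-valued, hence closed, point of $Y_\Omega$. The implication $(1)\Rightarrow(3)$ is immediate: the section witnessing stability also witnesses (3), since closedness of the $G$-action on $X_\sigma$ means every orbit there — including $G\cdot p$ — is closed. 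For $(3)\Rightarrow(2)$, recall from the construction of $Y$ that $V_\sigma:=\phi(X_\sigma)$ satisfies $\phi^{-1}(V_\sigma)=X_\sigma$. The closure $Z:=\overline{G\cdot p}^{X^\sstab_\Omega}$ is $G$-invariant and closed, so $\phi_\Omega(Z)\subset Y_\Omega$ is closed by property (i) of good quotients and equals the singleton $\{y\}$. Hence $Z\subset\phi_\Omega^{-1}(y)\subset(X_\sigma)_\Omega$, so the closure of $G\cdot p$ in $X^\sstab_\Omega$ coincides with its closure in $(X_\sigma)_\Omega$, which by (3) is $G\cdot p$ itself.

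For $(2)\Rightarrow(1)$, the task is to manufacture a $G$-invariant section $\sigma\in H^0(X,\sheaf{L}^N)^G$ whose non-vanishing locus is affine, contains $p$, carries a closed $G$-action, and has finite stabilizers throughout. Let $U\subset X^\sstab$ be the $G$-invariant open locus of points with finite stabilizer — open by upper semi-continuity of stabilizer dimension — and let $W\subset U$ be the subset of points whose orbit is closed in $X^\sstab$; by hypothesis $p\in W$. Granted that $W$ is open and $\phi$-saturated, so that $\phi(W)\subset Y$ is open with $\phi^{-1}(\phi(W))=W$, ampleness of $\sheaf{M}$ on $Y$ produces $s\in H^0(Y,\sheaf{M}^k)$ with $\phi(p)\in Y_s\subset\phi(W)$ affine. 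Pulling back under the identification $\phi^*\sheaf{M}\iso\sheaf{L}^{\otimes n}$ on $X^\sstab$ yields a $G$-invariant section of $\sheaf{L}^{nk}$ on $X^\sstab$, which — after multiplication by a large power of some semi-stability witness $\sigma_0\in H^0(X,\sheaf{L}^m)^G$ and using noetherianness of $X$ — extends to the sought-after $\sigma\in H^0(X,\sheaf{L}^N)^G$. Then $X_\sigma\subset W$ is affine with $\sigma(p)\ne 0$, every orbit is closed, and every stabilizer is finite, so $p$ is stable.

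The main obstacle is precisely the openness and $\phi$-saturatedness of $W$, i.e.\ Mumford's Amplification 1.11 in \cite{GIT}. Openness is established by combining upper semi-continuity of stabilizer dimensions with a local structure argument for $\phi|_U$, identifying the non-closed-orbit locus as the image of a suitable relative morphism and hence at least constructible. Saturatedness then follows because, if $q\in U$ and $\phi(q)=\phi(q')$ for some $q'\in W$, then the unique closed orbit in the fibre is $G\cdot q'$, and a specialisation $G\cdot q'\subset\overline{G\cdot q}$ together with finite stabilizers on both sides forces $\dim G\cdot q=\dim G\cdot q'$, hence $G\cdot q=G\cdot q'$ and $q\in W$. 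The extension of $\phi^*s$ from $X^\sstab$ to $X$ is a routine noetherian manipulation and presents no conceptual difficulty.
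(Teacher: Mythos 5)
Your chain $(1)\Rightarrow(3)\Rightarrow(2)$ is fine (the saturation $\phi^{-1}(\phi(X_\sigma))=X_\sigma$ you invoke does hold, because an invariant $\sigma$ descends via property (iii) of good quotients and $\phi^*\sheaf{M}\iso\sheaf{L}^{\otimes n}$ to a section $\bar\sigma$ of a power of $\sheaf{M}$ with $X_\sigma=\phi^{-1}(Y_{\bar\sigma})$). The genuine gap is in $(2)\Rightarrow(1)$, exactly at the point you yourself call the main obstacle: your argument needs $W$ to be \emph{open} and saturated, but the justification you offer --- the non-closed-orbit locus is ``the image of a suitable relative morphism and hence at least constructible'' --- does not deliver openness, and constructibility is not enough to produce the affine $Y_s\subset\phi(W)$. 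Your saturation argument is also incomplete: it only treats $q\in U$, whereas you must exclude every $q$ in a fibre meeting $W$, including those with positive-dimensional stabilizer. Both points are repairable: $X^\sstab\setminus U$ is closed and invariant, so $\phi(X^\sstab\setminus U)$ is closed by property (i) of good quotients, and one checks $W=\phi^{-1}\bigl(Y\setminus\phi(X^\sstab\setminus U)\bigr)$ using that each fibre of a good quotient contains a unique closed orbit and that the boundary of an orbit is a union of orbits of strictly smaller dimension. But as written, the crucial step is asserted, not proved.

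For comparison, the paper does not reprove the statement at all: it cites Mumford's Amplification 1.11 and observes that the proof needs only Theorem \ref{thm:git-quotient}, hence works verbatim for noetherian $X$. Mumford's argument is local and avoids $W$ (and the extension of $\phi^*s$ from all of $X^\sstab$, which is \emph{not} covered by the usual lemma on extending sections over a single $X_{\sigma_0}$). Namely: choose one invariant $\sigma_0$ with $p\in X_{\sigma_0}$ affine; the locus $Z\subset X_{\sigma_0}$ of positive-dimensional stabilizers is closed and invariant and, by the fibre argument above, disjoint from the fibre $\phi_0^{-1}(\phi_0(p))=G\cdot p$ of the affine quotient $\phi_0\colon X_{\sigma_0}\to\Spec\Gamma(X_{\sigma_0},\OO)^G$; properties (i)--(iii) of good quotients then give an invariant function $f$ with $f\equiv 0$ on $Z$ and $f(p)\neq 0$. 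The open set $(X_{\sigma_0})_f$ already has every required property: closedness of the action is \emph{automatic} once all stabilizers are finite, since then all orbits have dimension $\dim G$ and none can lie in the boundary of another. Finally $f\sigma_0^N$ extends to a global invariant section $\sigma$ of a suitable power of $\sheaf{L}$ for $N\gg 0$, and replacing $\sigma$ by $\sigma\cdot\sigma_0$ forces $X_\sigma=(X_{\sigma_0})_f$. I would recommend adopting this route, or else supplying the openness of $W$ along the lines indicated above.
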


\begin{proof}
This is \cite[Amplification 1.11]{GIT} (with ``regular'' replaced by ``finite
stabilizer'', since we work with properly stable points).  Although Mumford's
$X$ is of finite type, the proof goes through without change as long as Theorem
\ref{thm:git-quotient} on the existence of quotients holds, which we have
established in the noetherian situation.
\end{proof}

\begin{lemma}\label{lemma:stab-extension}
Let $k\subset K$ be an arbitrary field extension, and let $\sheaf{L}_K$ be the
pullback of $\sheaf{L}$ to $X_K$. Then
$X^\stab(\sheaf{L})_K = X_K^\stab(\sheaf{L}_K)$ and
$X^\sstab(\sheaf{L})_K = X^\sstab_K(\sheaf{L}_K)$.
\end{lemma}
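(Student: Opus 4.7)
The plan is to prove the two semi-stability inclusions separately, then deduce the stable case from Proposition \ref{prop:variations}.

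The central input is the base-change identification
$$H^0(X,\sheaf{L}^n)^G \tensor_k K \iso H^0(X_K,\sheaf{L}_K^n)^{G_K},$$
which follows from flat base change for cohomology (using that $X$ is noetherian and $k\subset K$ is flat) combined with compatibility of the Reynolds operator with field extensions, the latter using that $G_K$ remains linearly reductive. The inclusion $X^\sstab(\sheaf{L})_K \subseteq X_K^\sstab(\sheaf{L}_K)$ is then immediate: a witnessing section $\sigma \in H^0(X,\sheaf{L}^n)^G$ pulls back to $\sigma_K$, which is $G_K$-invariant, non-zero at $p_K$, and has $(X_K)_{\sigma_K} = (X_\sigma)_K$ affine since base change of an affine scheme is affine. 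The corresponding inclusion for stability is similar, using that closedness of the orbit and finiteness of the stabilizer persist under base change.

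For the reverse inclusion, let $\tau \in H^0(X_K,\sheaf{L}_K^n)^{G_K}$ witness semi-stability of $p_K$, and expand $\tau = \sum c_i \sigma_i$ with $\sigma_i \in H^0(X,\sheaf{L}^n)^G$ and $c_i \in K$. Only finitely many coefficients occur, so we may assume $K/k$ is finitely generated and then treat three cases. (i) If $K/k$ is finite Galois with group $\Gamma$, the norm $\sigma = \prod_{g\in\Gamma} g(\tau)$ is $\Gamma$-invariant and so descends by Galois descent to an element of $H^0(X,\sheaf{L}^{n|\Gamma|})^G$; its base change equals $\bigcap_g (X_K)_{g(\tau)}$, a finite intersection of affine opens in $X_K$, hence affine under the (harmless in all intended applications) assumption that $X$ is separated. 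By fpqc descent of affineness, $X_\sigma$ is affine, and non-vanishing at $p_K$ survives since the norm of a non-zero element of a field is non-zero. (ii) For purely inseparable $K/k$ of exponent $e$ in characteristic $p>0$, the Frobenius power $\tau^{p^e}$ has coefficients in $K^{p^e}\subseteq k$, so lies in $H^0(X,\sheaf{L}^{np^e})^G$, and defines the same basic open as $\tau$. (iii) For purely transcendental $K/k$, spread $\tau$ out over a finitely generated $k$-subalgebra $R\subset K$, obtaining a family of sections over $\Spec R$ whose generic fiber has affine basic open; a Chevalley-type constructibility argument shows that the locus in $\Spec R$ where the basic open remains affine contains a non-empty open subset, and hence a $k$-rational point (after, if $k$ is finite, first enlarging the base field and then descending via case (i)).

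The stable case follows from the semi-stable case by Proposition \ref{prop:variations}: stability amounts to semi-stability together with closedness of $G\cdot p$ and finiteness of $G_p$, and both of these properties are invariant under arbitrary extensions of the base field (the first by fpqc descent of closed subschemes, the second by base change of finite group schemes). The main obstacle I expect is case (iii) of the reverse inclusion: establishing the constructibility of the ``affine basic open'' locus in a family, so as to control specialization from the generic point of $\Spec R$ to a $k$-rational point, will require some careful bookkeeping with the family $(X_R)_{\tau_R} \to \Spec R$.
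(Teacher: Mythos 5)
Your overall skeleton is right and matches the paper: reduce the stable case to the semi-stable one via Proposition \ref{prop:variations}, use $H^0(X_K,\sheaf{L}_K^n)=H^0(X,\sheaf{L}^n)\tensor_k K$, and note the easy inclusion is immediate. But for the reverse inclusion you abandon a complete argument that is already in your hands and replace it with a descent scheme that is both unnecessary and gapped. Once you write $\tau=\sum_i c_i\sigma_i$ with $\sigma_i\in H^0(X,\sheaf{L}^n)^G$ (or, as the paper does, with the $c_i$ chosen linearly independent over $k$, which forces each $\sigma_i$ to be invariant without presupposing the base-change identity for invariants), you are done: $\tau(p)\ne 0$ forces $\sigma_i(p)\ne 0$ for some $i$, and $X_{\sigma_i}$ is \emph{automatically affine because $\sheaf{L}$ is ample}. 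This last point is exactly the simplification the paper exploits (it is why Mumford's Proposition 1.14 is ``easier'' in the ample case), and it is the point you miss: your entire three-case analysis exists only to manufacture a single $k$-rational section with controlled non-vanishing locus whose basic open is affine, which the problem does not require.

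The descent argument itself also has genuine holes. The trichotomy ``finite Galois / purely inseparable / purely transcendental'' does not exhaust finitely generated extensions $K/k$ (a finite separable non-normal extension fits none of the cases, and a general finitely generated extension is a purely transcendental extension followed by a finite one, so the cases must at least be composed and the finite part replaced by a normal closure). In case (iii), a non-empty open subset of $\Spec R$ for a finitely generated $k$-algebra $R$ need not contain a $k$-rational point even when $k$ is infinite (e.g.\ $\mathbb{R}[x,y]/(x^2+y^2+1)$), so the specialization step needs more than the caveat about finite fields; and the constructibility of the ``affine basic open'' locus, which you yourself flag as the main obstacle, is left unproved. None of this machinery is needed: replace the second half of your argument by the two-line observation above and the proof closes.
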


\begin{proof}
This is \cite[Proposition 1.14]{GIT}, where $\sheaf{L}$ is not assumed to be
ample.  In the ample case, where the open subsets $X_{\sigma}\subset X$ for
$\sigma\in H^0(X,\sheaf{L}^d)$ are automatically affine, this is easier, so we
give the proof in this case, arguing as in \cite[Amplification 1.11]{GIT}.

Because of the characterization in Proposition \ref{prop:variations}(2) of
stable points, among semi-stable ones, it suffices to show the semi-stable
case.

We have
\begin{equation*}
H^0(X_K,\sheaf{L}_K^d) = H^0(X,\sheaf{L}^d)\tensor_k K.
\end{equation*}
If $p$ is a geometric point in $X^\sstab_K(\sheaf{L}_K)$, then there is an
invariant section $\sigma = \sum_i \sigma_i \tensor \alpha_i$ which is nonzero
on $p$. We may assume the elements $\alpha_i\in K$ are linearly independent;
then it follows that each $\sigma_i$ is invariant.  At least one $\sigma_i$ is
nonzero on $p$, hence $p$ is in $X^\sstab(\sheaf{L})_K$.  The other inclusion
$X^\sstab(\sheaf{L})_K \subset X^\sstab_K(\sheaf{L}_K)$ is trivial.
\end{proof}

\section{The numerical criterion --- statement}\label{sec:statement}

Let $p\colon \Spec \Omega \to X$ be a geometric point and let $\lambda\colon
\mathbb{G}_{m, K} \to G_K$ be a $1$-parameter subgroup over some field
extension $\Omega\subset K$.  Action by $\Gm$ on $p$ defines a morphism
$\mathbb{G}_{m,K} \to X_K$, and we may ask whether this extends to $\AA^1_K \to
X_K$. If it does, it does so uniquely, as $X$ is separated, and restriction to
$0\in \AA^1_K$ defines a $K$-valued limit point $p_0\colon \Spec K \to X_K$,
which we will denote
\begin{equation*}
p_0 = \lim_{t\to 0} \lambda(t)\cdot p.
\end{equation*}
It is a fixed point for the action of $\mathbb{G}_{m,K}$.

\begin{definition}\label{def:stabililty}
In the above situation, we let $\mu^{\sheaf{L}}(\lambda, p)$ denote the
negative of the $\mathbb{G}_{m,K}$-weight on the fibre $\sheaf{L}(p_0)$,
provided the limit point $p_0$ exists.  Otherwise, we let
$\mu^{\sheaf{L}}(\lambda, p) = \infty$.
\end{definition}

\begin{remark}
In order to simplify notation we shall sometimes write $\mu(\lambda, p)$
instead of $\mu^{\sheaf{L}}(\lambda, p)$ when it is clear which line bundle
$\sheaf{L}$ we refer to.
\end{remark}

\begin{theorem}\label{theo:numericalcriterion}
Let $k$ be an arbitrary field and let $f\colon X\to S$ be a projective morphism
of $k$-schemes.  Assume $S=\Spec A$ is noetherian and affine.  Let $G$ be an
affine, linearly reductive group over $k$, acting on $X$ and on $S$ such that
$f$ is equivariant, and let $\sheaf{L}$ be an ample, $G$-linearized invertible
sheaf on $X$.  Let $p\colon \Spec \Omega \to X$ be a geometric point.
\begin{enumerate}
\item The geometric point $p$ is stable if and only if
$\mu^{\sheaf{L}}(\lambda, p)>0$ for every non trivial $1$-PS
$\lambda\colon \mathbb{G}_{m, K} \to G_K$ and every field extension
$\Omega \subset K$.
\item The geometric point $p$ is semi-stable if and only if
$\mu^{\sheaf{L}}(\lambda, p)\ge 0$ for every $1$-PS
$\lambda\colon \mathbb{G}_{m, K} \to G_K$ and every field extension
$\Omega \subset K$.
\end{enumerate}
Moreover, if $S$ is of finite type over $k$, then the above statements continue
to hold with just $K=\Omega$, i.e.~it suffices to test with $1$-PSs defined
over $\Omega$.
\end{theorem}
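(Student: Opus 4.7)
\emph{Strategy.} The plan is to follow Mumford's approach in \cite{GIT} but with the isotypical decomposition $V_K=\bigoplus_{d\in\ZZ} V_{K,d}$ playing the role of a diagonalizing basis for a $\Gm$-action on projective space, as indicated in the introduction. By the reformulation of Section \ref{sec:criteria}, there is a $G$-equivariant closed immersion $i\colon X\hookrightarrow \PP(V)$ over $S$ for some $G$-equivariant coherent sheaf $V$ on $S$, with $\sheaf{L}^n \iso i^*\OO_{\PP(V)}(1)$ for some $n>0$. Under this embedding, $\mu^{\sheaf{L}}(\lambda,p)$ can be read off, up to the factor $n$, from the way a lift of $p$ distributes across the weight components of $V_K$; existence of the limit $p_0$ translates into a boundedness condition on the weights that occur. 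This explicit computation replaces the classical diagonalization step and is the first technical task of the proof.

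\emph{Necessity.} Suppose $p$ is semi-stable, let $\lambda\colon \Gm\to G_K$ be a $1$-PS, and assume the limit $p_0$ exists (else $\mu=\infty$ and there is nothing to check). Pick a $G$-invariant $\sigma\in H^0(X,\sheaf{L}^m)^G$ with $\sigma(p)\ne 0$, and form the orbit morphism $\phi\colon \AA^1_K\to X_K$, $t\mapsto \lambda(t)\cdot p$, with $\phi(0)=p_0$. The pulled back bundle $\phi^*\sheaf{L}^m$ is trivial, and we choose an equivariant trivializing generator $e$ whose fibre weight at $0$ matches the weight $-m\mu$ on $\sheaf{L}^m(p_0)$. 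Writing $\phi^*\sigma=f\cdot e$ with $f\in K[t]$, the $G$-invariance of $\sigma$ forces $f$ to be a pure weight-$m\mu$ eigenfunction of $\Gm$ on $\AA^1_K$, hence $f(t)=c\cdot t^{m\mu}$. Polynomiality gives $m\mu\ge 0$ (so $\mu\ge 0$), while $\sigma(p)\ne 0$ gives $c\ne 0$. For strict stability: if in addition $\mu=0$, then $f$ is a nonzero constant, so $p_0\in X_\sigma$; meanwhile, for nontrivial $\lambda$ the finite-stabilizer condition forbids any $G$-translate of $p$ from being $\Gm$-fixed (else $g^{-1}\lambda(t)g$ would be a nonconstant morphism $\Gm\to G_p$ into a finite group), so $p_0\notin G\cdot p$. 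This produces a point in the closure of $G\cdot p$ inside $X_\sigma$ not in $G\cdot p$, contradicting Proposition \ref{prop:variations}(3), and forcing $\mu>0$.

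\emph{Sufficiency and the finite-type addendum.} We prove the contrapositive: if $p$ is not semi-stable, we produce a $1$-PS with $\mu(\lambda,p)<0$. Choose a lift $\tilde p$ of $p$ in the affine cone $\VV(V)\to S$; non-semi-stability translates, via $X\hookrightarrow\PP(V)$ and Proposition \ref{prop:proj}, into the statement that every $G$-invariant homogeneous element of $\Sym^\bullet V$ of positive degree vanishes on $\tilde p$. A relative Hilbert--Mumford-type construction, performed over some algebraic closure $K$ of $\Omega$, then extracts a $1$-PS $\lambda$ with $\lim_{t\to 0}\lambda(t)\cdot\tilde p$ lying on the zero section of $\VV(V)$; via the isotypical decomposition $V_K=\bigoplus V_{K,d}$, this means precisely that $\mu(\lambda,p)<0$. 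The stability version is analogous, using Proposition \ref{prop:variations}(2) and the requirement that $G\cdot p$ be closed in $X^\sstab$. The principal obstacle is to carry out this Iwahori-type step over a general noetherian affine base $S$ rather than over a field, which is precisely why one is forced to work directly with the isotypical decomposition of $V$ in place of a diagonalizing basis. For the finite-type addendum, $V$ can be chosen defined over $k$, so the $1$-PS produced over $K=\overline{\Omega}$ descends to one defined over $\Omega$ by Galois descent, using that $\mu$ is invariant under Galois conjugation of $\lambda$.
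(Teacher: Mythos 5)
Your strategy and your ``necessity'' paragraph are sound: pulling back an invariant section $\sigma$ along the extended orbit map $\AA^1_K\to X_K$ and using that the equation $f(t)=c\,t^{m\mu}$ must be polynomial is a correct (and more classical) route to ``(semi-)stable $\Rightarrow$ numerical condition'' than the paper's, which instead deduces this direction from the cone picture: if $\mu(\lambda,p)<0$ then $[p^*]_d=0$ for $d\le 0$, so $\lim_{t\to 0}\lambda(t)\cdot p^*$ lands in the zero section of $\VV(V)$, contradicting Proposition \ref{prop:geometric-criteria}. However, the converse direction --- the actual content of the Hilbert--Mumford criterion --- is not proved in your proposal. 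You write that ``a relative Hilbert--Mumford-type construction \ldots\ extracts a $1$-PS $\lambda$'' and then identify the Iwahori-type step over a noetherian base as ``the principal obstacle,'' but you never carry it out. That step is the entire point. The paper's argument is: non-properness of $\phi_{p^*}\colon G\to\VV(V)$ (resp.\ the orbit closure meeting $0_S$) is witnessed, via the valuative criterion, by a $\Kls$-valued point $\phi$ of $G$ with $\phi\cdot p^*$ being $\Kps$-valued (resp.\ specializing into $0_S$); the Iwahori decomposition $\phi=\psi_1\cdot\lambda\cdot\psi_2$ in $G(\Kls)$ produces a $1$-PS; one conjugates $\lambda$ by the specialization $b_2=\psi_2(0)$; and the two estimates $[(b_2^{-1}\psi_2)\cdot p^*]_d\in t^{-d}\Hom(V_d,\Kps)$ and $[(b_2^{-1}\psi_2)\cdot p^*]_d=[p^*]_d+tZ_d$ combine to show $[p^*]_d=0$ or $d\ge 0$ (resp.\ $d>0$), i.e.\ $\mu(b_2^{-1}\lambda b_2,p)\le 0$ (resp.\ $<0$) by Lemma \ref{lem:grading}. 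Without this, or a substitute for it, the proof is incomplete precisely where it matters.

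Your treatment of the field extension $K$ and the finite-type addendum is also off the mark. After the reduction at the end of Section \ref{sec:statement} one may assume $k=\Omega$ is algebraically closed, so there is nothing to gain by passing to an algebraic closure of $\Omega$ and no Galois descent to perform. The extension $K$ in the statement arises for a different reason: over a merely noetherian base, the valuative criterion detecting non-properness of $\phi_{p^*}$ may only be satisfied by a complete DVR of the form $\Kps$ with $K$ a genuine extension of $k$, and the resulting $1$-PS then lives in $G_K$. The finite-type hypothesis is used exactly here: when $\VV(V)$ is of finite type one can take the DVR to be the completed local ring at a closed point of a curve over $k$, hence $K=k$. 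Finally, a small referencing slip: the translation of non-semi-stability into the vanishing of positive-degree invariants on the lift $\tilde p$ (equivalently, $0_S$ meeting the orbit closure) is Proposition \ref{prop:geometric-criteria}, not Proposition \ref{prop:proj}.
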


Note that, to detect (semi-)stability, it suffices to test with one parameter
subgroups for which the limit point $p_0$ in Definition \ref{def:stabililty}
exists, since the value of $\mu$ is infinite otherwise.  Since $f\colon X\to S$
is relatively projective, the valuative criterion for properness shows that
the limit point $p_0$ in $X$ exists if and only if the limit $\lim_{t\to 0}
\lambda(t)\cdot f(p)$ exists in $S$.

We may view the geometric point $p$ as a closed point in $X_\Omega$.  Since
(semi-)stability is preserved by the base change to $\Omega$ by Lemma
\ref{lemma:stab-extension}, we may thus, without loss of generality, assume
that $k$ is algebraically closed and $p$ is a closed point. We assume this in
the remainder of this paper.

\section{Geometric criteria} \label{sec:criteria}

We continue to work with a projective, $G$-equivariant morphism $f\colon X\to
S$, where $S=\Spec A$ for a noetherian $k$-algebra $A$, and an ample,
$G$-linearized invertible sheaf $\sheaf{L}$ on $X$.  By replacing $\sheaf{L}$
with a tensor power, which does not affect the stable or semi-stable loci in
$X$, we may assume that $\sheaf{L}$ is $f$-very ample.

Let $V=H^0(X,\sheaf{L})$, considered as an $A$-module, and let
\begin{equation*}
T = \bigoplus_{d\ge 0} H^0(X,\sheaf{L}^d)
\end{equation*}
be the section ring of $\sheaf{L}$. After replacing $\sheaf{L}$ by a further
tensor power if necessary, we may assume that $T$ is finitely generated by
$T_1=V$ as a $T_0$-algebra, and $X=\Proj T$ \cite[Lemma 8.1.23]{Liu}.  We let
\begin{equation*}
X^*=\Spec T
\end{equation*}
be the affine cone over $X$. The surjection $\Sym V \to T$ induces closed
embeddings $X\subset \PP(V)$ and $X^*\subset\VV(V)$.  We write $0_S$ for the
zero section in $\VV(V)$.

Our aim in this section is to characterize (semi-)stability of a point in
$X\subset\PP(V)$ in terms of the orbit of a lift to $\VV(V)$; these are the
geometric criteria for (semi-)stability.

First we need a lemma: let $W$ be a $k$-scheme and assume $G$ acts on $W$.
Recall that for every point $w\in W(k)$, there is an induced map
\begin{equation*}
\phi_w \colon G  \to W
\end{equation*}
defined by $ g \mapsto g \cdot w $.

\begin{lemma}\label{lemma:proper-phi}
Let $W$ be a separated $k$-scheme, let $ \sigma : G \times W \to W $ be an
algebraic action and let $w\in W$ be a closed point.  Then
\begin{equation*}
\phi_w\colon G \to W
\end{equation*}
is proper if and only if $G \cdot w$ is closed in $W$ and the stabilizer $G_w$
is finite.
\end{lemma}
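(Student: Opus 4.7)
The plan is to split the biconditional and, in both directions, use the factorization $\phi_w \colon G \twoheadrightarrow G\cdot w \hookrightarrow W$, where the orbit $G\cdot w$ is given its reduced induced subscheme structure in $W$.

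For the direction $(\Rightarrow)$, assume $\phi_w$ is proper. The set-theoretic image $\phi_w(G) = G\cdot w$ is closed in $W$ because proper morphisms are universally closed. The stabilizer $G_w$ is the scheme-theoretic fibre $\phi_w^{-1}(w)$ (since $w$ is a $k$-rational closed point, $k$ being algebraically closed); as a fibre of a proper morphism over a closed point, $G_w$ is proper over $k$, and as a closed subscheme of the affine group $G$ it is also affine. An affine and proper $k$-scheme is necessarily finite, so $G_w$ is finite.

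For the direction $(\Leftarrow)$, assume $G\cdot w$ is closed in $W$ and $G_w$ is finite. The inclusion $G\cdot w \hookrightarrow W$ is then a closed immersion, hence proper, and it suffices to prove that the orbit map $\psi \colon G \to G\cdot w$ is proper. The key ingredient from the theory of homogeneous spaces is that $\psi$ exhibits $G$ as a right $G_w$-torsor over its orbit, so $\psi$ is fppf-locally isomorphic to the first projection $G_w \times G\cdot w \to G\cdot w$. Since $G_w$ is finite this projection is finite, and finiteness descends along fppf covers, so $\psi$ is itself finite, in particular proper. Composing the two proper morphisms shows $\phi_w$ is proper.

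The hard part is justifying the torsor structure of $\psi$ rigorously: it uses the existence of the quotient $G/G_w$ and its identification with the reduced orbit, and in positive characteristic non-reducedness for the stabilizer can require some care. A more self-contained alternative is to verify the valuative criterion for $\psi$ directly: for a DVR $R$ with fraction field $K$, compatible $K$- and $R$-valued input data produce a $K$-point of the base change $\Spec R \times_{G\cdot w} G$, which is finite over $\Spec R$ (its fibres are copies of $G_w$ and $\psi$ is flat as an orbit map). The required $R$-valued lift then exists because $R$ is integrally closed in $K$, and uniqueness is automatic from the separatedness of $G$.
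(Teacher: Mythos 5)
Your forward direction is correct and coincides with the paper's. The problem is the converse, where everything rests on the factorization $\phi_w\colon G \twoheadrightarrow G\cdot w \hookrightarrow W$ with $G\cdot w$ carrying the \emph{reduced} induced structure, and on $\psi\colon G\to G\cdot w$ being a $G_w$-torsor. Both claims can fail in the setting of this paper, because a linearly reductive group over a field of characteristic $p$ need not be reduced (for instance $\mu_p$ is linearly reductive). Concretely, let $G=\mu_p$ act on $W=\AA^1$ by multiplication in characteristic $p$ and take $w=1$: the set-theoretic orbit is the closed point $\{1\}$ and the stabilizer is trivial, yet $\phi_w$ is the closed immersion of $V((x-1)^p)$ into $\AA^1$, which does not factor through the reduced point, and $G$ is certainly not a torsor under the trivial group over $\Spec k$. (The lemma itself still holds there --- $\phi_w$ is a closed immersion, hence proper --- but your proof of it does not.) The correct object to factor through is the fppf quotient $G/G_w$ together with the theorem that $G/G_w\to W$ is an immersion; then $G\to G/G_w$ is a finite fppf $G_w$-torsor and your descent argument applies. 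But that is exactly the ``hard part'' you flag and never close: your fallback valuative-criterion argument still invokes ``$\psi$ is flat as an orbit map,'' which is the same unestablished (and, for the reduced orbit, generally false) assertion, so the gap is not repaired.

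For comparison, the paper's proof of this direction avoids homogeneous-space theory altogether. It replaces $W$ by the \emph{schematic} image $Z$ of $\phi_w$ (which sidesteps the reducedness issue), observes that $\phi_w$ is quasi-finite because this holds at the points of the finite fibre $G_w$ and hence everywhere by translation, applies generic finiteness of a separated quasi-finite morphism to obtain a dense open $U\subset Z$ over which $\phi_w$ is finite, and then covers $Z$ by translates of $U$ to conclude that $\phi_w$ is finite, hence proper. That route is elementary and characteristic-free. If you want to keep your route, you must either add the hypothesis that $G$ is smooth (in which case $G/G_w$ is reduced and does coincide with the reduced orbit) or actually prove the statements about $G/G_w$ rather than defer them.
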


See \cite[Lemma 0.3]{GIT} and \cite[Lemma 3.17]{New}; the latter is phrased for
varieties, but the proof goes through in wide generality, and it is the one we
follow below.

\begin{proof}
If $\phi_w$ is proper, its image $G \cdot w$ is closed in $W$. Moreover, $G_w$
is proper over $k$ and is closed (being the fibre over the closed point $w$) in
$G$, which is affine and of finite type. Thus $G_w$ is finite.

Assume now that $G \cdot w$ is closed in $W$ and that $G_w$ is finite.  Let
$Z\subset W$ be the schematic image of the (quasi-compact) morphism $\phi_w$.
Since $G$ is of finite type over $k$, the set $G(k)$ is dense in $G$, and its
image forms a dense subset of $Z$. As $Z$ is an invariant subscheme, we may
replace $W$ by $Z$.

The map $\phi_w$ is quasi-finite, since this holds locally at any point in
$G_w$, and thus everywhere by translation. By \cite[Lemma \texttt{03I1}]{Stacks}, there
exists an open dense $U \subset Z$ such that the restriction $\phi_w^{-1}(U)
\to U$ is finite. The translates of $U$ must cover $Z$, so $\phi_w$ is indeed
finite and hence proper.
\end{proof}

\begin{proposition}\label{prop:geometric-criteria}
Let $p\in X$ be a closed point and let $p^*\in X^*$ denote any lift.
Denote by $\mathrm{cl}(G \cdot p^*)$ the closure of the orbit of $p^*$ in $\VV(V)$.
\begin{enumerate}
\item $p \in X^\sstab$ if and only if $0_S \cap \mathrm{cl}(G \cdot p^*) = \emptyset$ inside $\VV(V)$.
\item $p \in X^\stab$ if and only if
 $\phi_{p^*}\colon G\to \VV(V)$ is proper.
\end{enumerate}
\end{proposition}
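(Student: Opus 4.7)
The plan is to follow Mumford's classical argument, using the identification of $G$-invariant sections $\sigma\in T_n^G = H^0(X,\sheaf{L}^n)^G$ with homogeneous $G$-invariant functions of degree $n$ on the affine cone $X^*$. Any such $\sigma$ of positive degree vanishes on the vertex $X^*\cap 0_S$, and via the surjection $\Sym V \to T$ combined with the Reynolds operator it lifts to a $G$-invariant element $\tilde\sigma\in (\Sym V)^G$, i.e.~a $G$-invariant regular function on all of $\VV(V)$.

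For part (1), the forward direction is a formality: pick a $\sigma$ witnessing semi-stability and lift to $\tilde\sigma$ as above. Since $\tilde\sigma$ is $G$-invariant, it is constant on $G\cdot p^*$ with the nonzero value $\sigma(p^*)$, hence on the closure, so $\mathrm{cl}(G\cdot p^*)$ cannot meet $0_S$, where $\tilde\sigma$ (being of positive degree) vanishes. For the reverse, apply linear reductivity of $G$ acting on the affine scheme $\VV(V)$: two disjoint closed $G$-invariant subsets have disjoint images in the good quotient $\Spec (\Sym V)^G$, so they can be separated by some $F\in (\Sym V)^G$. Decomposing $F$ according to the $\Gm$-weight coming from the grading of $\Sym V$ gives invariant homogeneous components $F_n$, and since $F|_{0_S}=F_0$ must vanish, some $F_n$ with $n>0$ is nonzero at $p^*$; its image in $T_n^G$ is the desired section of $\sheaf{L}^n$, and ampleness of $\sheaf{L}$ makes $X_{F_n}$ automatically affine.

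For part (2), the bridge is the following geometric observation. Given $\sigma\in T_n^G$ with $\sigma(p)\neq 0$, the morphism
\[
(\pi,\sigma)\colon X^*_\sigma \longrightarrow X_\sigma \times \Gm,
\]
where $\pi\colon X^*\setminus 0_S \to X$ is the projection and the target carries the trivial $G$-action on the second factor, is finite and $G$-equivariant. Since $\tilde\sigma$ is $G$-invariant and nonvanishing at $p^*$, the closure $\mathrm{cl}_{\VV(V)}(G\cdot p^*)$ lies in $X^*_\sigma$, and $(\pi,\sigma)$ sends $G\cdot p^*$ onto $G\cdot p\times\{\sigma(p^*)\}$. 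Combining closedness of the finite map $(\pi,\sigma)$ with the fact that its fibres are finite yields the equivalence
\[
G\cdot p^* \text{ is closed in }\VV(V) \iff G\cdot p \text{ is closed in }X_\sigma.
\]
With this in hand, if $p$ is stable then $G_{p^*}\subseteq G_p$ is finite and Proposition \ref{prop:variations}(3) supplies a $\sigma$ with $G\cdot p$ closed in $X_\sigma$; the equivalence yields $G\cdot p^*$ closed in $\VV(V)$, and Lemma \ref{lemma:proper-phi} gives properness of $\phi_{p^*}$. Conversely, if $\phi_{p^*}$ is proper then Lemma \ref{lemma:proper-phi} gives $G\cdot p^*$ closed and $G_{p^*}$ finite; since $p^*\notin 0_S$ and $0_S$ is invariant, closedness forces disjointness from $0_S$, so $p$ is semi-stable by (1), and the equivalence gives $G\cdot p$ closed in $X_\sigma$ for the resulting $\sigma$.

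The main obstacle is the converse direction of (2), where finiteness of $G_{p^*}$ is weaker than what Proposition \ref{prop:variations}(3) requires, namely finiteness of $G_p$. I would handle this by observing that $G_p/G_{p^*}$ embeds into $\Gm$ via the character describing the action of $G_p$ on the fibre of $\sheaf{L}$ at $p$; if this quotient were positive-dimensional, a 1-PS in $G_p$ would act on $p^*$ through a non-trivial character, driving $p^*$ into $0_S$ in the limit and contradicting $\mathrm{cl}(G\cdot p^*)\cap 0_S=\emptyset$. Hence $G_p=G_{p^*}$ is finite, and Proposition \ref{prop:variations}(3) concludes.
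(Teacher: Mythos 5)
Your part (1) is essentially the paper's own argument: an invariant section of positive degree is constant and nonzero on the orbit closure but vanishes on the zero section, and conversely linear reductivity separates the two disjoint closed invariant sets by an invariant function whose positive-degree homogeneous components (again invariant, since the action preserves the grading) supply the required section; whether one works on $\VV(V)$ or on the cone $X^*=\Spec T$ is immaterial.

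In part (2), however, the step ``combining closedness of the finite map $(\pi,\sigma)$ with the fact that its fibres are finite yields the equivalence'' hides a real gap in one direction. That $G\cdot p^*$ closed implies $G\cdot p$ closed is fine (finite morphisms are closed), but the converse does not follow from what you state: the preimage $(\pi,\sigma)^{-1}\bigl(G\cdot p\times\{\sigma(p^*)\}\bigr)$ is closed, yet it is a degree-$d$ cover of $G\cdot p$ which in general strictly contains the orbit $G\cdot p^*$, so its closedness says nothing about closedness of the orbit itself. One can patch this with a boundary-orbit dimension argument, but over a merely noetherian base that is delicate. The paper sidesteps the issue by transferring \emph{properness} rather than orbit-closedness: it sets $W=\Spec T/(\sigma-1)$, observes that $\pi\colon W\to X_\sigma$ is finite, and uses that $\phi_p=\pi\circ\phi_{p^*}$ is proper if and only if $\phi_{p^*}$ is (composition with, respectively cancellation against, the finite map $\pi$); Lemma \ref{lemma:proper-phi} then converts properness of $\phi_p$ into exactly the pair of conditions --- closed orbit in $X_\sigma$ and finite stabilizer $G_p$ --- appearing in Proposition \ref{prop:variations}. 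This also makes your closing paragraph on $G_p/G_{p^*}$ unnecessary, since the stabilizer comparison is absorbed into the properness transfer; your observation there is nevertheless correct, and can be seen even more directly from the fact that $\sigma$, homogeneous of degree $d$ and constant on $G\cdot p^*$, forces the character $G_p\to\Gm$ cutting out $G_{p^*}$ to land in $\mu_d$. The same fix works verbatim in your setup: from $G_p$ finite and $G\cdot p$ closed in $X_\sigma$, Lemma \ref{lemma:proper-phi} gives $\phi_p$ proper, and cancellation against the separated map $(\pi,\sigma)$ gives $\phi_{p^*}$ proper without ever asserting beforehand that $G\cdot p^*$ is closed.
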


\begin{proof}
This is a direct extension of \cite[Prop.~2.2]{GIT}, and we just check that
the proof given by Mumford goes through in the relative situation.

Let $Z = 0_S \cap X^*$.

(1) If $p\in X^\sstab$, there is an invariant section $\sigma$ of
$\sheaf{L}^d$, with $d>0$, such that $\sigma(p)\ne 0$. As $\sigma$ is
invariant, it is constant (and nonzero, as $\sigma(p)\ne 0$) on the orbit
$G\cdot p^*$, hence on its closure.  But $\sigma$ is zero on $Z$, since $d>0$.
Thus $\mathrm{cl}(G\cdot p^*)$ cannot intersect $0_S$.

For the converse, observe that since $Z$ and $\mathrm{cl}(G \cdot p^*)$ are
closed and invariant in $X^*$, the reductivity of $G$ implies that we can find
an invariant element $\sigma\in T^G$ such that $\sigma \equiv 1$ on
$\mathrm{cl}(G \cdot p^*)$ and $ \sigma \equiv 0 $ on $Z$. This implies that
some homogeneous component $\sigma_d\in T_d = H^0(X,\sheaf{L}^d)$ of $\sigma$
of degree $d>0$ satisfies $\sigma(p^*)\ne 0$.  As the $G$-action respects the
grading on the coordinate ring, this component $\sigma_d$ is invariant.

(2) Assume $\phi_{p^*}$ is proper. In particular the orbit $G\cdot p^*$ is
closed; then $ 0_S \cap \mathrm{cl}(G \cdot p^*) = \emptyset$, since $0_S$ is
$G$-invariant.  By the first part, $p$ is semi-stable, so we can find an
invariant section $\sigma\in H^0(X, \sheaf{L}^d)$ with $d > 0$ such that
$\sigma(p^*)$ is nonzero.  By scaling $\sigma$, we may assume $\sigma(p^*)=1$.
Let
\begin{equation*}
W = \Spec T/(\sigma-1) \subset X^*\setminus Z.
\end{equation*}
The open locus $\sigma\ne 0$ in $X$ is
\begin{equation*}
X_\sigma = \Spec T^{(d)}/(\sigma-1)
\end{equation*}
(where $T^{(d)} = \bigoplus_n T_{nd}$).  Then $T/(\sigma-1)$ is finite as a
module over its subalgebra $T^{(d)}/(\sigma-1)$, hence $W\to X_\sigma$ is
finite and surjective.

Thus we have a diagram
\begin{equation*}
\begin{diagram}
G & \rTo^{\phi_{p^*}} & W \\
  & \rdTo_{\phi_p} & \dTo_\pi \\
  && X_\sigma
\end{diagram}
\end{equation*}
in which $\pi$ is finite, and hence proper. Then $\phi_p$ is proper if and only
if $\phi_{p^*}$ is proper.  We have assumed $\phi_{p^*}$ is proper, so $\phi_p$
is proper.  By Lemma \ref{lemma:proper-phi}, $G\cdot p$ is closed in
$X_\sigma$, and $G_p$ is finite. By Proposition \ref{prop:variations}(3), $p$
is stable.

Conversely, if $p$ is stable, then it is certainly semi-stable, so again we
have the diagram above.  By Proposition \ref{prop:variations}(2), $G\cdot p$
closed in $X_\sigma$ and $G_p$ is finite. So $\phi_p$ is proper by Lemma
\ref{lemma:proper-phi}, and thus $\phi_{p^*}$ is proper.
\end{proof}

\section{The numerical criterion --- proof}\label{sec:numerical}

We now use the geometric criteria from the previous section to establish the
numerical criterion in Theorem \ref{theo:numericalcriterion}.

\begin{definition}\label{def:linear}
By a \emph{linear} action of an affine group $G=\Spec B$ on $\VV(V)$, we mean
an action
\begin{equation*}
G\times \VV(V) \to \VV(V)
\end{equation*}
such that the image of $V\subset \Sym V$, under the corresponding ring
homomorphism $\Sym V \to B\tensor \Sym V$, is contained in $B\tensor V$.
\end{definition}

The action on $\VV(V) = \VV(H^0(X,\sheaf{L}))$ induced by the linearization of
$\sheaf{L}$ is (clearly) linear.

\begin{lemma}\label{lemma:grading}
Let $A$ be a ring and $V$ an $A$-module.
\begin{enumerate}
\item There is a canonical one to one correspondence between $\Gm$-actions on
$\Spec A$ and gradings $A=\bigoplus_{d\in \ZZ} A_d$.  \item Given an action as
in (1), there is a canonical one to one correspondence between linear actions
on $\VV(V)$, such that the projection $\VV(V)\to \Spec A$ is equivariant, and
gradings $V=\bigoplus_{d\in \ZZ} V_d$ making $V$ a graded $A$-module.
\end{enumerate}
\end{lemma}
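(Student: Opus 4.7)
The plan is to use the standard dictionary translating $\Gm$-actions on affine schemes into $\ZZ$-gradings of the coordinate rings. Write $\Gm = \Spec k[t,t^{-1}]$, and recall that a $\Gm$-action on $\Spec R$ amounts to a $k$-algebra map $\sigma^*\colon R \to R\otimes_k k[t,t^{-1}]$ satisfying the counit and coassociativity axioms.

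For (1), given a coaction $\sigma^*$, I would define $A_d = \{a \in A : \sigma^*(a) = a \otimes t^d\}$. Writing $\sigma^*(a) = \sum_d a_d \otimes t^d$, the counit axiom yields $a = \sum_d a_d$, while coassociativity forces each $a_d \in A_d$. Multiplicativity of $\sigma^*$ then gives $A_e \cdot A_d \subseteq A_{d+e}$ and $1 \in A_0$, so $A = \bigoplus_d A_d$ is a $\ZZ$-grading. Conversely, any such grading determines $\sigma^*$ by $\sigma^*(a) = a \otimes t^d$ for $a \in A_d$, extended linearly; one checks directly that this is a ring homomorphism and satisfies the coaction axioms. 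The two constructions are mutually inverse by construction.

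For (2), a $\Gm$-action on $\VV(V) = \Spec \Sym_A V$ for which the projection $\VV(V) \to \Spec A$ is equivariant corresponds to a $k$-algebra map $\tau^*\colon \Sym_A V \to \Sym_A V \otimes_k k[t,t^{-1}]$ whose restriction to $A$ equals $\sigma^*$; linearity further demands $\tau^*(V) \subseteq V \otimes_k k[t,t^{-1}]$. The argument from (1), applied to $V$ in place of $A$, produces a $k$-module decomposition $V = \bigoplus_d V_d$ from $\tau^*$; then the identity $\tau^*(av) = \sigma^*(a)\,\tau^*(v)$ combined with the gradings on $A$ and $V$ forces $A_e \cdot V_d \subseteq V_{d+e}$, so $V$ is a graded $A$-module. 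For the converse direction I would invoke the universal property of $\Sym_A V$: the rule $v \mapsto v \otimes t^d$ for $v \in V_d$ defines an $A$-linear map $V \to \Sym_A V \otimes_k k[t,t^{-1}]$, where the target is viewed as an $A$-algebra via $\sigma^*$; this extends uniquely to an $A$-algebra map $\tau^*$, which by construction restricts to $\sigma^*$ on $A$ and is linear.

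The main subtlety I expect is keeping track of the $A$-algebra structure on the target $\Sym_A V \otimes_k k[t,t^{-1}]$: equivariance forces this structure to come through $\sigma^*$ rather than through the naive inclusion $A \hookrightarrow \Sym_A V \otimes 1$, and it is precisely the relation $A_e V_d \subseteq V_{d+e}$ that makes the candidate map $V \to \Sym_A V \otimes_k k[t,t^{-1}]$ $A$-linear with respect to this twisted structure, so that the universal property applies. Once this is set up correctly, verifying the counit and coassociativity axioms for $\tau^*$ reduces, by the uniqueness clause of the universal property, to the corresponding axioms on $A$ and on $V$, which in turn follow from the direct sum decompositions.
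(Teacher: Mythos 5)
Your proposal is correct and follows essentially the same route as the paper: both translate the $\Gm$-action into a coaction $A\to A[t,t^{-1}]$ (resp.\ an $A$-linear map $V\to V[t,t^{-1}]$ with the target module structure twisted through the coaction on $A$), define the graded pieces as the elements sent to $at^d$ (resp.\ $vt^d$), and derive the direct sum decomposition from the counit and coassociativity axioms. The paper only sketches this, so your more detailed treatment of the universal property of $\Sym_A V$ and of the twisted $A$-algebra structure on the target is a faithful elaboration rather than a different argument.
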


\begin{proof}
This is well known; we briefly sketch the argument.  A $\Gm$-action on $\Spec
A$ corresponds to a ring homomorphism
\begin{equation*}
A \to A[t,t^{-1}]
\end{equation*}
and we let $A_d\subset A$ consist of all elements $a\in A$ which are mapped to
$at^d$.  One shows $A=\bigoplus A_d$ by using the identity and associativity
group axioms.  A linear $\Gm$-action on $\VV(V)$, over a given action on $\Spec
A$, corresponds to an $A$-module homomorphism
\begin{equation*}
V \to V[t,t^{-1}]
\end{equation*}
(where $V[t,t^{-1}]$ is an $A$-module via the given action $A\to A[t,t^{-1}]$)
and we let $V_d\subset V$ consist of all elements $v\in V$ which are mapped to
$vt^d$.  One shows $V=\bigoplus V_d$ by using the identity and associativity
group axioms, and $A_dV_e\subset V_{d+e}$ is immediate.
\end{proof}

In the absolute case of a projective scheme over $k$, any action of a one
parameter subgroup on $\PP^n_k$ can be diagonalized. To handle the relative
situation $\PP(V)\to S$, we next introduce a construction that will replace the
coordinates of a point with respect to a diagonalizing basis.

An $R$-valued point $v\colon \Spec R \to \VV(V)$ is equivalent to a pair
\begin{align*}
A &\to R  \quad \text{($k$-algebra homomorphism)}\\
V &\to R  \quad \text{($A$-module homomorphism)}.
\end{align*}
In fact, the first homomorphism $A\to R$ defines the base point
\begin{equation*}
f(v)\colon \Spec R \to \Spec A,
\end{equation*}
and the second homomorphism $V\to R$ (in which $R$ is considered as an
$A$-algebra through $f(v)$) determines and is determined by an $A$-algebra
homomorphism $\Sym V\to R$, which defines $v$.  When $\Gm$ acts on $\Spec A$
and $\VV(V)$ compatibly, so that we have gradings $A=\bigoplus A_d$ and
$V=\bigoplus V_d$, we shall write
\begin{equation*}
[v]_d \colon V_d \to R
\end{equation*}
for the composition $V_d \subset V \to R$ (a $k$-module homomorphism).
Together, $f(v)$ and all $[v]_d$ determine the point $v$ uniquely. Similarly,
we shall write
\begin{equation*}
[f(v)]_d \colon A_d \to R
\end{equation*}
for the composition $A_d\subset A \to R$.

The effect of the $\Gm$-action on $[v]_d$ can be described as follows:
an $R$-valued point $g$ of $\Gm = \Spec k[t,t^{-1}]$ is a $k$-algebra homomorphism
\begin{equation*}
k[t,t^{-1}] \to R,
\end{equation*}
i.e.\ an invertible element $T\in R$ (the image of $t$). Its action on $v\in
\VV(V)(R)$ is the $R$-valued point
\begin{equation*}
\Spec R \xrightarrow{(g,v)} \Gm \times_k \VV(V) \to \VV(V)
\end{equation*}
or dually:
\begin{equation*}
\Sym V \to (\Sym V)[t,t^{-1}] \to R.
\end{equation*}
Its restriction
\begin{equation*}
V_d \to V_d t^d \to R
\end{equation*}
is $T^d[v]_d$, i.e. the homomorphism $[v]_d\colon V_d \to R$, followed by
multiplication by $T^d$ in $R$. Thus we have
\begin{equation*}
[g\cdot v]_d = T^d[v]_d
\end{equation*}
and similarly
\begin{equation*}
[g\cdot f(v)]_d = T^d[f(v)]_d.
\end{equation*}
Together, these two expressions determine $g\cdot v$ uniquely. For the
canonical $k(\!(t)\!)$-valued point in $\Spec k[t,t^{-1}]$ given by
\begin{equation*}
k[t,t^{-1}]\subset k(\!(t)\!),
\end{equation*}
the invertible element $T$ above is $t\in k(\!(t)\!)$.

We next show that the numerical invariant $\mu^\sheaf{L}(p,\lambda)$ in the
case of $\sheaf{L}=\OO(1)$ on $\PP(V)$ (for a linear action as in Definition
\ref{def:linear}), can be expressed in terms of the (non) vanishing of
$[p^*]_d$ for a lift $p^*\in\VV(V)$ of $p\in\PP(V)$. This is our replacement
for \cite[Prop.~2.3]{GIT}, expressing the numerical invariant on $\PP^n_k$
using coordinates with respect to a diagonalizing basis.

\begin{lemma}\label{lem:grading}
For a closed point $p\in \PP(V)$, and any lift $p^*\in \VV(V)$, we have
\begin{equation*}
\mu^{\OO(1)}(p, \lambda) = -\min \{d  \mid \text{$[p^*]_d\colon V_d\to k$ is nonzero}\}
\end{equation*}
for the grading $V=\bigoplus V_d$ induced by $\lambda$. We allow the
possibility $\mu = \infty$.
\end{lemma}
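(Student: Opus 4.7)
The plan is to unwind the $\Gm$-action on $\VV(V)$ via the grading $V = \bigoplus_{d \in \ZZ} V_d$ produced by Lemma \ref{lemma:grading}. The orbit morphism $t \mapsto \lambda(t)\cdot p^*$ is then given in components by $[\lambda(t)\cdot p^*]_d = t^d[p^*]_d$ in $V_d \otimes_k k[t, t^{-1}]$. Since a $k(\!(t)\!)$-point of $\PP(V)$ depends only on the $k(\!(t)\!)^\times$-equivalence class of its lift to $\VV(V)\setminus 0_S$, we may replace $\lambda(t)\cdot p^*$ by $q = t^{-d_0}\cdot\lambda(t)\cdot p^*$, with components $[q]_d = t^{d - d_0}[p^*]_d$, where $d_0 = \min\{d \mid [p^*]_d \neq 0\}$ whenever this minimum exists. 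I expect $q$ to extend uniquely to a $k[\![t]\!]$-valued point of $\VV(V)$, thereby realising the limit of $\lambda(t)\cdot p$ in $\PP(V)$, from which the weight of $\Gm$ on $\OO(1)(p_0)$, and hence $\mu$, can be read off.

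Concretely I proceed in three steps. First, suppose $d_0$ exists. All components $[q]_d$ lie in $V_d \otimes_k k[\![t]\!]$ because $t^{d - d_0}\in k[\![t]\!]$ for $d \ge d_0$ and $[p^*]_d = 0$ for $d < d_0$. To ensure $q$ defines a $k[\![t]\!]$-point of $\VV(V)$, I must also verify that the associated base map $A \to k(\!(t)\!)$ extends to $A \to k[\![t]\!]$, i.e.~that $[f(p^*)]_d = 0$ for every $d < 0$. This follows from the $A$-linearity identity $[p^*](av) = [f(p^*)](a)\cdot[p^*](v)$ together with minimality of $d_0$: a nonzero $[f(p^*)]_{d_1}(a)$ with $d_1 < 0$, combined with some $v \in V_{d_0}$ satisfying $[p^*]_{d_0}(v) \neq 0$, would yield $[p^*]_{d_0 + d_1}(av) \neq 0$ with $d_0 + d_1 < d_0$, a contradiction. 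Composing the resulting morphism $\Spec k[\![t]\!] \to \VV(V)$ with $\VV(V) \setminus 0_S \to \PP(V)$ yields the sought extension $\AA^1 \to \PP(V)$, whose value at $t = 0$ is the limit $p_0$.

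Second, to read off the weight, observe that the quotient $V \to k$ corresponding to the lift $q|_{t=0} \in \VV(V)(k)$ of $p_0$ is given by $[p^*]_{d_0}$ on $V_{d_0}$ and zero on the remaining $V_d$. It therefore factors through the $\Gm$-eigenspace $V_{d_0}$ of weight $d_0$, so $k \iso \OO_{\PP(V)}(1)(p_0)$ inherits $\Gm$-weight $d_0$, giving $\mu^{\OO(1)}(p, \lambda) = -d_0$. Third, if the minimum does not exist, then the nonempty set $\{d \mid [p^*]_d \neq 0\}$ is unbounded below; for any $N \in \ZZ$, the rescaling $t^{-N}\cdot(\lambda(t)\cdot p^*)$ still has nonzero components in strictly negative powers of $t$ on some $V_d$. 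No rescaling therefore extends across $t = 0$, the orbit map has no extension to $\AA^1 \to \PP(V)$, and $\mu = \infty$ by convention.

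The main obstacle is the extension argument in the relative setting. In the absolute case $A = k$, one simply diagonalizes the $\Gm$-representation on $V$ and reads off the weight from a basis, as in \cite[Prop.~2.3]{GIT}. Without diagonalization, the grading $V = \bigoplus V_d$ serves as a global substitute, and the nontrivial ingredient is the coupling between the $V$-grading and the $A$-grading coming from $A$-linearity of $[p^*]$. This coupling is precisely what ensures that the rescaled lift $q$ projects consistently to $\Spec A$ over $k[\![t]\!]$, not merely to the abstract scheme $\VV(V)$.
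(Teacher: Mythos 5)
Your proof is correct and follows essentially the same route as the paper: the grading $V=\bigoplus_d V_d$ replaces a diagonalizing basis, the $A$-linearity of $[p^*]$ couples the base and fibre gradings to control the limit in $S$, and the lift of the limit point concentrated in degree $d_0$ yields $\mu=-d_0$. The only cosmetic differences are that you rescale the lift by $t^{-d_0}$ where the paper instead shifts the grading (i.e.\ the linearization) by $r=d_0$, and you dispose of the $\mu=\infty$ case by showing directly that no rescaled lift extends over $t=0$, where the paper argues via finite generation of $V$ over $A$ that the minimum is achieved whenever the limit in $S$ exists.
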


\begin{proof}
We divide the proof in two parts: first we treat the case where $\mu$ is
infinite.

By definition, $\mu^{\OO(1)}(p,\lambda)=\infty$ if and only if the limit
$\lim_{t\to 0} \lambda(t)\cdot p$ does not exist. As $\PP(V)\to S$ is
proper, this happens if and only if the limit $\lim_{t\to 0} \lambda(t)\cdot
f(p)$ in $S$ does not exist, by the valuative criterion for properness. The
limit in $S$ exists if and only if $[f(p)]_d=0$ for all $d<0$. We want to show
that this is equivalent to the statement that for any $d_0$, there exists
$d<d_0$ such that $[p^*]_d\ne0$.

Suppose the limit in $S$ does not exist. Then there is a homogeneous element
$a\in A_{-e}$ for some $e>0$, such that $[f(p)]_{-e}\colon A_{-e}\to k$ is
nonzero on $a$. Let $p^*$ be a lift of $p$; it is a point in $\VV(V)\setminus 0_S$,
and we have $[p^*]_d\ne 0$ for some $d$, since otherwise $p^*$ would be in the
zero section $0_S$. Pick any $v\in V_d$ such that $[p^*]_d(v) \ne 0$. Now, for
every $N>0$, the product $a^Nv \in V_{d-Ne}$ is an element on which
$[p^*]_{d-Ne}$ is nonzero.

Suppose the limit in $S$ exists, so that $[f(p)]_d=0$ for all $d<0$. The
$A$-module $V$ is finitely generated, say by homogeneous elements
$g_i\in V_{d_i}$. Now an arbitrary element in $V_d$ can be written $\sum_i a_i
g_i$ where $a_i\in A_{d-d_i}$. Since the homomorphism $V\to k$ defining $p^*$
is $A$-linear, it sends $a_ig_i$ to the product of $[f(p)]_{d-d_i}(a_i)$ and
$[p^*]_{d_i}(g_i)$. But $[f(p)]_{d-d_i}=0$ for all $d<d_i$. Thus $[p^*]_d=0$
for $d$ smaller than the degree of all the finitely many generators $g_i$.

Next we treat the case where the invariant $\mu$ is finite, i.e.~the limit
$p_0 = \lim_{t\to 0} \lambda(t)\cdot p$ exists.

The universal rank $1$ quotient
\begin{equation*}
V\tensor_A \OO_{\PP(V)} \to \OO_{\PP(V)}(1)
\end{equation*}
induces a closed immersion
\begin{equation*}
L=\VV(\OO_{\PP(V)}(1)) \subset \VV(V\tensor_A\OO_{\PP(V)}) = \VV(V)\times_A \PP(V)
\end{equation*}
and $L$ is the line bundle corresponding (contravariantly) to the invertible
sheaf $\OO_{\PP(V)}(1)$. This immersion is equivariant with respect to the
given $G$-actions on $L$ (via the linearization) and on $\VV(V)$ and $\PP(V)$.
Over the limit point $p_0$, we have fibres
\begin{equation*}
L(p_0) \subset \VV(V)(f(p_0)).
\end{equation*}
Let $r$ denote the minimal $d$ such that $[p^*]_d$ is nonzero. We shall find a
lift $p_0^*\in \VV(V)$ of $p_0$ (necessarily in $L(p_0)$, by definition of
``lift'') and show that $g\in\Gm$ acts on $p_0^*$ by scalar multiplication with
$g^r$.

For an arbitrary lift $p^*$ of $p$, we have $[p^*]_r \ne 0$ and $[p^*]_d=0$ for
$d<r$. The action of $\Gm$ on $\VV(V)$ corresponds to a grading
$V = \bigoplus V_d$. Now modify this action by shifting the degrees by $r$:
let $V= \bigoplus_d V'_d$, where $V'_d = V_{d+r}$ has degree $d$. This does not
change the action on $\PP(V)$, i.e.\ we have only changed its linearization.
With respect to this new grading, the ``homogeneous components'' of $p^*$ are
$[p^*]'_d=[p^*]_{d+r}$, so that $[p^*]'_d=0$ for all $d < 0$. The ring
homomorphism
\begin{equation*}
\Sym V \to k[t,t^{-1}]
\end{equation*}
corresponding to the (modified) action $\Gm\to\VV(V)$ on $p^*$ has image in
$k[t]$, since both $A=\Sym^0 V$ (by assumption on $f(p)$) and $V=\Sym^1 V$ have
image in $k[t]$. The specialization $p^*_0$ to $t=0$ is a lift of $p_0$.
Moreover, returning to the original grading $V=\bigoplus_d V_d$, we have
\begin{equation*}
[p^*_0]_d =
\begin{cases}
[p^*]_r & \text{for $d=r$}\\
0 & \text{otherwise.}
\end{cases}
\end{equation*}
Hence $g\in \Gm$ acts on $p^*_0$ by scalar multiplication with $g^r$.
\end{proof}

We shall abbreviate $\mu^{\OO(1)}(\lambda, p)$ to $\mu(\lambda,p)$ in the
following.

By the geometric criteria in Proposition \ref{prop:geometric-criteria}, the
Hilbert--Mumford criterion in Theorem \ref{theo:numericalcriterion} reduces to
the following:

\begin{theorem}\label{theo:main}
Let $p\in \PP(V)$ and let $p^*\in \VV(V)$ be an arbitrary lift.
\begin{enumerate}
\item $\phi_{p^*}\colon G \to \VV(V)$ is not proper if and only if there is a
field extension $k\subset K$ and a nontrivial $1$-PS $\lambda\colon \mathbb{G}_{m, K} \to G_K$ such that $\mu(\lambda, p) \le 0$.
\item The zero section $0_S\subset \VV(V)$ intersects the closure of the orbit
$G\cdot p^*$ non-trivially if and only if there is a field extension $k\subset K$ and a $1$-PS
$\lambda\colon \mathbb{G}_{m, K} \to G_K$, such that $\mu(\lambda, p) < 0$.
\end{enumerate}
Moreover, if $S$ is of finite type over $k$, then the above statements continue
to hold with just $K=k$.
\end{theorem}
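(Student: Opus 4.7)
The plan is to split each part into an easy direction (from $1$-PS to geometric conclusion) and a hard direction (the reverse), and to treat the specialization $K = k$ in the finite-type case separately.

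For the easy directions, Lemma \ref{lem:grading} identifies $\mu(\lambda, p) = -r$, where $r$ is the smallest degree in which $[p^*_K]_d$ is nonzero under the grading induced by $\lambda$. If $\mu(\lambda, p) \le 0$ then $r \ge 0$, and the same $k[t,t^{-1}]$-versus-$k[t]$ analysis used in the proof of Lemma \ref{lem:grading} shows that $t \mapsto \lambda(t) \cdot p^*_K$ extends across $t = 0$ to an $\AA^1_K$-point of $\VV(V)_K$. For nontrivial $\lambda$ this forces either the orbit of $p^*_K$ in $\VV(V)_K$ to be non-closed or its stabilizer to be infinite, so $\phi_{p^*}$ fails to be proper by Lemma \ref{lemma:proper-phi}, proving the ``if'' in (1). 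If moreover $\mu(\lambda, p) < 0$ then $r > 0$, so every $V_d$-component of the limit vanishes at $t=0$ (for $d > 0$ by the factor $t^d$ and for $d \le 0$ because $[p^*_K]_d = 0$), placing the limit in $0_S$ and proving the ``if'' in (2).

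For the hard directions, I would feed the geometric hypothesis into the valuative criterion for properness. In case (1), non-properness of $\phi_{p^*}$ yields a DVR $R$ with field of fractions $L$ and an $L$-point $g$ of $G$ such that $g \cdot p^*$ extends to an $R$-point of $\VV(V)$, while $g$ itself has no extension to an $R$-point of $G$; in case (2), the hypothesis directly supplies such data with the limit in $0_S$. After completing and replacing the residue field, I may assume $R = \Kps$, producing $g \in G(\Kls)$. The key input is then the Iwahori decomposition for the linearly reductive group $G_K$ over the local field $\Kls$: $g = u_1 \cdot \lambda(t) \cdot u_2$ with $u_1, u_2 \in G(\Kps)$ and $\lambda\colon \mathbb{G}_{m,K} \to G_K$ a $1$-PS. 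Since $u_1(t)$ is regular at $t = 0$ and $u_2(0) \in G(K)$ acts without changing orbit closures, replacing $p^*_K$ by $u_2(0) \cdot p^*_K$ reduces the problem to the existence of $\lim_{t \to 0} \lambda(t) \cdot (u_2(0) \cdot p^*_K)$ in $\VV(V)_K$ (in case (1)), with the limit in $0_S$ (in case (2)). Lemma \ref{lem:grading} then yields $\mu(\lambda, p) \le 0$, respectively $\mu(\lambda, p) < 0$.

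For the finite-type addendum, when $S$ is of finite type over the algebraically closed field $k$ both $\VV(V)$ and $G$ are of finite type over $k$, so the non-properness locus of $\phi_{p^*}$ and the intersection $\mathrm{cl}(G \cdot p^*) \cap 0_S$ are nonempty $k$-defined subsets; a curve-selection or spreading-out argument then lets me take the DVR to be $k[\![t]\!]$, and the resulting $1$-PS is already defined over $k$. I expect the main obstacle to be the Iwahori step: one must justify the decomposition $g = u_1 \cdot \lambda(t) \cdot u_2$ for every $g \in G(\Kls)$ and general linearly reductive $G_K$ that is not necessarily split over $K$. This may force passage to a further separable extension of $K$, but that is harmless since Theorem \ref{theo:main} allows arbitrary field extensions of $k$.
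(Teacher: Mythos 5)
Your proposal follows essentially the same route as the paper: the hard directions via the valuative criterion, the Iwahori decomposition $\phi=\psi_1\cdot\lambda\cdot\psi_2$, conjugation by the specialization $b_2=\psi_2(0)$, and the computation of the components $[p^*]_d$ in the grading induced by the (conjugated) $1$-PS; the easy directions by showing that $\lambda(t)\cdot p^*$ extends over $t=0$ (into $0_S$ when $\mu<0$); and the finite-type refinement by choosing the DVR inside a curve over $k$. There are two points where you are looser than, or diverge from, the paper. First, for ``if'' in (1) the paper does not pass through Lemma \ref{lemma:proper-phi}: it observes that a nontrivial $1$-PS is never $k[t]$-valued (the image of $\Gamma(G,\OO_G)$ in $k[t,t^{-1}]$ is stable under $t\mapsto t^{-1}$), so $\lambda$ together with the extension of $\lambda\cdot p^*$ is already a diagram violating the valuative criterion for $\phi_{p^*}$. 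Your dichotomy ``orbit non-closed or stabilizer infinite'' is true but its second horn needs an argument you do not supply: if the limit $q$ lies in the orbit, say $q=g\cdot p^*$, then since $q$ is $\lambda$-fixed the conjugate $g^{-1}\lambda g$ lands in the stabilizer of $p^*$, which is therefore infinite. The paper's route avoids this case division. Second, in the hard direction, Lemma \ref{lem:grading} applied to $u_2(0)\cdot p^*$ yields $\mu(\lambda,u_2(0)\cdot p)\le 0$, that is, $\mu(u_2(0)^{-1}\cdot\lambda\cdot u_2(0),p)\le 0$: the destabilizing $1$-PS you actually produce is the conjugate of $\lambda$ by $u_2(0)$, not $\lambda$ itself. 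This is exactly what the paper proves and is all the theorem requires, but it should be stated; likewise the passage from ``$\lambda\cdot u_2\cdot p^*$ is $\Kps$-valued'' to ``$\lim_{t\to 0}\lambda(t)\cdot u_2(0)\cdot p^*$ exists'' is precisely the content of the paper's equations (A) and (B) and deserves to be written out rather than asserted. Your closing concern about the Iwahori decomposition for a non-split group is legitimate and, as you note, harmless here since a further extension of $K$ is permitted by the statement.
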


Some notation in preparation for the proof: there are inclusions of $k$-algebras
\begin{equation*}
k \subset k[t] \subset k[\![t]\!] \subset k(\!(t)\!)
\end{equation*}
and
\begin{equation*}
k \subset k[t] \subset k[t,t^{-1}] \subset k(\!(t)\!).
\end{equation*}
($k[\![t]\!]$ is the algebra of formal power series, with the field
$k(\!(t)\!)$ of formal Laurent series as quotient field.) Thus any valued point
(in a separated scheme), with values in any of these algebras, can be
considered as a $k(\!(t)\!)$-valued point, and we may ask whether such a point
in fact takes values in one of the smaller algebras. If it takes value in (at
least) $k[\![t]\!]$, we may consider the limit as $t\to 0$, i.e.\ the
composition with $\Spec k \to \Spec k[\![t]\!]$ induced by $t\mapsto 0$.

In particular, a $1$-PS $\lambda\colon \Gm\to G$ is a $k[t,t^{-1}]$-valued
point, and Mumford writes $\langle \lambda \rangle$ for the same point
considered as $k(\!(t)\!)$-valued (we still write $\lambda$).

In the following proofs, we follow Mumford \cite[Theorem 2.1]{GIT} closely,
with our $[p^*]_d$ playing the role of the diagonalization of the $\Gm$-action.
The labelling (A) and (B) of equations below is for easy comparison with
Mumford.

\begin{proof}[Proof of ``only if'' in part (1)]
Suppose $\phi_{p^*}\colon G\to \VV(V)$ is not proper. By the valuative
criterion, there is a field extension $k\subset K$ and a commutative diagram 
\begin{equation}\label{diag:valuative} 
\begin{diagram}
\Spec \Kls & \rTo^\phi & G \\
\dTo && \dTo_{\cdot p^*} \\
\Spec \Kps & \rTo & \VV(V)
\end{diagram}
\end{equation}
where $\phi$ is not $\Kps$-valued (using that every complete DVR over $k$ is of
the form $\Kps$). Moreover, if $S$ is of finite type, then so is $\VV(V)$, and
then we may take $\Kps$ to be the completed local ring at a closed point of a
smooth curve over $k$, and thus assume $k=K$.

By the Iwahori theorem, there exists a $1$-PS $\lambda\colon \Gm \to G$ and two
$\Kps$-valued points $\psi_1, \psi_2 \in G(\Kps)$ such that
\begin{equation*}
\phi = \psi_1 \cdot \lambda \cdot \psi_2
\end{equation*}
(this is the product in the group $G(\Kls)$). Since $\phi$ is not $\Kps$-valued
(this is the only use of that fact), the $1$-PS $\lambda$ is nontrivial, i.e.\
not $k$-valued. Let $b_i = \psi_i(0)$ be the specializations to $t=0$ (we shall
only use $b_2$).

We shall show that $\mu(b_2^{-1}\cdot\lambda\cdot b_2, p) \le 0$. In the group
$G(\Kls)$, we have
\begin{equation*}
(\psi_1\cdot b_2)^{-1} \cdot \phi = (b_2^{-1}\cdot\lambda\cdot b_2)\cdot (b_2^{-1}\cdot \psi_2)
\end{equation*}
and when this acts on $p^*$, viewed as an element of $\VV(V)(\Kls)$, we get
\begin{equation}\label{eq:action}
(\psi_1\cdot b_2)^{-1} \cdot (\phi\cdot p^*) = (b_2^{-1}\cdot\lambda\cdot b_2)\cdot (b_2^{-1}\cdot \psi_2)\cdot p^*
\end{equation}
and here $\phi\cdot p^*$ is $\Kps$-valued, by diagram \eqref{diag:valuative}.
Thus everything on the left is $\Kps$-valued.

Now we use the grading $V=\bigoplus V_d$ corresponding to the $1$-PS
$b_2^{-1}\cdot \lambda \cdot b_2$. Then \eqref{eq:action} gives
\begin{align*}
[(\psi_1\cdot b_2)^{-1} \cdot (\phi\cdot p^*)]_d
&= [(b_2^{-1}\cdot\lambda\cdot b_2)\cdot (b_2^{-1}\cdot \psi_2)\cdot p^*]_d\\
&= t^d[(b_2^{-1}\cdot \psi_2)\cdot p^*]_d
\end{align*}
where multiplication by $t^d$ happens in $\Hom(V_d, \Kls)$. For this to be
$\Kps$-valued, we must have
\begin{equation*}
[(b_2^{-1}\cdot \psi_2)\cdot p^*]_d
\in t^{-d} \Hom(V_d, \Kps).\eqno{(A)}
\end{equation*}
On the other hand, $b_2^{-1}\cdot\psi_2$ specializes to the identity
$e\in G(K)$ at $t=0$, so $b_2^{-1}\cdot\psi_2\cdot p^*$ specializes to $p^*$ at
$t=0$. Thus
\begin{equation*}
[(b_2^{-1}\cdot \psi_2)\cdot p^*]_d = [p^*]_d + t Z_d \in \Hom(V_d, \Kps)\eqno{(B)}
\end{equation*}
for some $Z_d\in \Hom(V_d,\Kps)$.

Equations (A) and (B) imply that either $[p^*]_d = 0$ or $d\ge 0$. This proves
that $\mu(b_2^{-1}\cdot\lambda\cdot b_2, p) \le 0$.
\end{proof}

\begin{proof}[Proof of ``only if'' in part (2)]
We first note that the zero section $0_S\subset \VV(V)$ is $G$-invariant, so if
it intersects the closure of $G\cdot p^*$, then the orbit is not closed. This
prevents $\phi_{p^*}\colon G\to \VV(V)$ from being proper, and this particular
way of failing properness is detected by the valuative criterion by a diagram
\eqref{diag:valuative} in which $\phi\cdot p^*$ specializes to a point in the
zero section $0_S$ at $t=0$. Again, in the case where $S$ is of finite type
over $k$, there is a diagram \eqref{diag:valuative} with $k=K$.

Consequently (with notation as in the proof of part (1)),
$(\psi_1\cdot b_2)^{-1}\cdot(\phi\cdot p^*)$ is not only $\Kps$-valued, but
specializes to a point in $0_S$ at $t=0$ (since $\phi\cdot p^*$ does so, and
$0_S$ is invariant, so $(\psi_1\cdot b_2)^{-1}(0)$ must send
$(\phi\cdot p^*)(0)$ to another point in $0_S$). This means that
\begin{equation*}
[(\psi_1\cdot b_2)^{-1}\cdot(\phi\cdot p^*)]_d \in t \Hom(V_d,\Kps)
\end{equation*}
and then, instead of just (A) above, we get
\begin{equation*}
[(b_2^{-1}\cdot \psi_2)\cdot p^*]_d
\in t^{-d+1} \Hom(V_d, \Kps).\eqno{(\widetilde{A})}
\end{equation*}
Combined with (B) as before, we conclude that either $[p^*]_d=0$ or $d>0$, so
$\mu(b_2^{-1}\cdot \lambda \cdot b_2, p) < 0$.
\end{proof}

\begin{proof}[Proof of ``if'' in (1) and (2).]
Use the grading $V=\bigoplus V_d$ induced by the $1$-PS $\lambda$. The
$k[t,t^{-1}]$-valued point $\lambda\cdot p^*$ is uniquely determined by
$\lambda\cdot f(p)$ (a $k[t,t^{-1}]$-valued point in $S$) together with
\begin{equation*}
[\lambda\cdot p^*]_d = t^d[p^*]_d
\end{equation*}
for all $d$. When $\mu(\lambda, p) \le 0$, we have $[p^*]_d=0$ for all $d<0$,
This, together with $\lambda\cdot f(p)$ being $k[t]$-valued, implies that
$\lambda\cdot p^*$ is $k[t]$-valued, i.e.\ it has a specialization at $t=0$.

On the other hand, a nontrivial $1$-PS $\lambda\colon \Gm \to G$ is never
$k[t]$-valued, since the corresponding $k$-algebra-homomorphism must be
compatible with the group inverse $\iota\colon G\to G$:
\begin{equation*}
\begin{diagram}
\Gamma(G,\OO_G) & \rTo &k[t,t^{-1}] \\
\dTo^{\iota^*} & & \dTo_{t\mapsto t^{-1}}\\
\Gamma(G,\OO_G) & \rTo &k[t,t^{-1}]
\end{diagram}
\end{equation*}
Thus, the image $\lambda^*(\Gamma(G,\OO_G))$ is invariant under $t\mapsto
t^{-1}$, but then the image is either $k$ or $k[t,t^{-1}]$. It follows that
$\lambda$ gives rise to a diagram \eqref{diag:valuative}, preventing
properness. This proves ``if'' in claim (1).

When $\mu(\lambda, p) < 0$, we have $[p^*]_d=0$ for $d\le 0$. The
specialization $p^*_0$ of $\lambda\cdot p^*$ to $t=0$ thus satisfies $[p^*_0]_d
= 0$ for all $d$, so $p^*_0$ is a point in the zero section $0_S$. It is
clearly in the closure of $G\cdot p^*$. This proves ``if'' in claim (2).
\end{proof}

\section{Examples}\label{sec:examples}

\subsection{An elementary example} \label{ex:elementary}
We start with a first example where $X= \PP^1 \times \AA^2, S=\AA^2$  and the
map $f: X \to S$ is given by projection onto the second factor. We consider the
action of the  group $G=\Gm=\Spec k[t,t^{-1}]$ on $X$ by $((u:v),x,y)
\mapsto ((tu:v/t),tx,y/t)$ and on $S$ by  $(x,y) \mapsto (tx,y/t)$. Clearly,
$f$ is then $G$-equivariant. The group action of $G$ on $X$ can be linearized
by replacing $\PP^1$ by $\AA^2$ with affine coordinates $(u,v)$ and the action
$(u,v) \mapsto (tu,v/t)$. We shall work with the $G$-linearized line bundle
$\sheaf{L}$ given by the pullback of $\sheaf{O}_{\PP^1}(1)$ and we shall first
analyse the  (semi-)stable and unstable points of $X$  by using Theorem
\ref{theo:numericalcriterion}. According to this theorem points $p \in X$ which
are not stable are detected by non-trivial $1$-parameter subgroups $\lambda$ of
$G$. A necessary condition for a point $p$ not to be stable is that the limit
$\lim_{t \to 0} \lambda(t)f(p)$ exists. This shows immediately that points $p$
with $x,y \neq 0$ are always stable. Now assume $y=0$. For $\lim_{t \to 0}
\lambda(t)f(p)$ to exist we must have $\lambda(t)=t^d$ with $d>0$. If
$v\neq 0$, then $\mu(\lambda,p) = d >0$ and thus there cannot be a
destabilizing $1$-parameter subgroup. On the other hand if $v=0$, then
$\mu(\lambda,p) =  -d <0$ and hence these points are unstable. The same
argument shows that the points $x=u=0$ are unstable, whereas the points
$x=0, u \neq 0$ are stable. In this example there are no strictly semi-stable
points.

The good quotient $S/G$ is easy to understand. Let $A=k[x,y]$. Then $A^G=k[T]$
where $T=xy$. The fibres of the map $S \to \Spec k[x,y]^G=\Spec k[T]=\AA^1$
away from the origin are the closed orbits $xy=T\neq 0$, whereas the coordinate
axes are mapped to the origin. Next we turn to $X=\Proj A[u,v]$. We have
$A[u,v]^G = k[xy, xv, yu, uv]$. Renaming these generators by $T=xy$, $X=xv$,
$Y=yu$, $Z=uv$ we find that $A[u,v]^G= A^G[X,Y,Z]/(XY-TZ)$. Thus $X^\stab/G \to
\Spec A^G$ is a conic bundle with two lines intersecting transversally over the
origin. The singular point in the central fibre is the image of points $x=y=0$
and $uv \neq 0$. The orbits of stable points with $x=0, y \neq 0$ and $y=0,
x\neq 0$ are mapped to the smooth points of the two components of the central
fibre . We remark that $X^\stab/G$ lives in $\AA^1 \times \PP(1,1,2)$ and has
an ordinary $A_1$-singularity at $X=Y=T=0$, i.e. where the two lines in the
central fibre meet. The singularity arises since the points in the orbit
$x=y=0$, $uv\ne 0$ have nontrivial stabilizer group $\mu_2$.

\subsection{A degeneration of Hilbert schemes on conics}\label{ex:degHilbert}

The next example illustrates how our results can be used to study Hilbert
schemes for a semi-stable degeneration of curves. In a subsequent paper we
shall use this approach to study degenerations of Hilbert schemes more
generally, notably in the setting of $K3$ surfaces.

We put $X = V(t z^2 - xy) \subset \AA^1 \times \PP^2$. The semi-stable fibration
\begin{equation*}
f \colon X \to \AA^1 = \Spec k[t]
\end{equation*}
degenerates a smooth conic into a union of two lines $Y = V(t,x)$ and $Y' =
V(t,y)$ intersecting transversally in a point $\mathfrak{p}$. The Hilbert
scheme $\Hilb^n_{X/\AA^1}$ and its central fibre have quite bad singularities
due to the presence of subschemes with $\mathfrak{p}$ contained in the support.
We shall construct a better behaved compactification of
$\Hilb^n_{X^{\mathrm{sm}}/\AA^1} \to \AA^1$, where
$X^{\mathrm{sm}}= X - \{\mathfrak{p}\}$. Indeed, the stack of Li--Wu
\cite{LiWu} yields such a compactification; we shall give a GIT version of
their construction.

Roughly, the idea is to replace $ \mathfrak{p} $ by a chain of smooth rational
curves linking $Y$ and $Y'$, and add certain subschemes of length $n$ supported
in the \emph{smooth} locus of the resulting semi-stable curve.

An important tool is Li's so-called \emph{expanded degenerations}
\begin{equation*}
f[n] \colon X[n] \to \AA^{n+1}
\end{equation*}
of the map $f$. We will start by explaining a few key points of this theory,
for more details see \cite{Li} and \cite{ACFW} (where it is also explained that
the final result is independent of the various choices which one makes in the
construction).

Let $s$ and $s'$ be defining sections of the effective divisors $Y$ and $Y'$,
and let $\AA^2 \to \AA^1$ be the map $(t_1, t_2) \mapsto t_1t_2$. Then
$X \times_{\AA^1} \AA^2$ has an $A_1$-singularity, locally of the form
$s s' - t_1 t_2 = 0$. The blow up
$\pi_1 \colon X[1] \to X \times_{\AA^1} \AA^2$ in $Y \times V(t_2)$ resolves
this singularity by a small resolution, and the induced map
$X[1] \to \AA^2$ is $f[1]$.

The scheme $X[1]$ is a closed subscheme of the $\PP^1$-bundle
\begin{equation*}
\PP[1] := \PP(p_1^*\OO_{X} (-Y) \oplus \OO_{X \times_{\AA^1} \AA^2})
\end{equation*}
over $X \times_{\AA^1} \AA^2$, cut out by equations $s v_1 - t_2 u_1 = 0$ and
$t_1 v_1 - s' u_1 = 0$, where $u_1$ and $v_1$ denote homogeneous coordinates in
the fibres.

We let the torus $G[1] := \Spec k[\sigma_1, \sigma_1^{-1}]$ act on $\AA^2$ by
$(t_1,t_2) \mapsto (\sigma_1 t_1, \sigma_1^{-1} t_2)$. Then $Y \times V(t_2)$
is invariant for the induced action on $X \times_{\AA^1} \AA^2$, hence the
action lifts uniquely to $X[1]$. Letting $G[1]$ act on $\PP[1]$ by $(u_1:v_1)
\mapsto (\sigma_1 u_1:v_1)$; also $X[1] \subset \PP[1]$ is equivariant.

Composing $f[1]$ with the projection from $\AA^2$ to its $t_2$-factor gives a
map $X[1] \to \AA^1$, the central fibre is again a union of smooth irreducible
components $Y_1$ and $Y_1'$ intersecting transversally. This fact allows one to
iterate the above procedure; inductively, one constructs $X[n]$ as $X[n-1][1]$.
Again, $X[n]$ is a closed subscheme of a $\PP^1$-bundle $\PP[n]$ over
$X[n-1] \times_{ \AA^{n} } \AA^{n+1}$, inducing a natural fibration
$f[n] \colon X[n] \to \AA^{n+1}$.

The torus $G[n] := \Spec k[\sigma_1^{\pm 1}, \ldots, \sigma_n^{\pm 1}]$ acts on
$f[n]$, compatibly with the $G[n-1]$-action on $f[n-1]$; on $\AA^{n+1}$ by
\begin{equation*}
(t_1, \ldots, t_i, \ldots, t_{n+1}) \mapsto (\sigma_1 t_1, \ldots, \sigma_i \sigma_{i-1}^{-1} t_i, \ldots, \sigma_n^{-1} t_{n+1})
\end{equation*}
and in the $\PP^1$-fibres of $\PP[n]$ by $(u_n:v_n) \mapsto (\sigma_n u_n:v_n)$.

We introduced the $\PP^1$-bundles above in order to construct a useful
$G[n]$-linearized sheaf $\sheaf{L}$ on $X[n]$. Fix an ample sheaf $\OO_X(1)$ on
$X$ and let $\OO_{\pi_i}(1)$ denote the restriction to $X[i]$ of the hyperplane
bundle on $\PP[i]$. For suitable integers $0 \ll a_{n-1} \ll \ldots \ll a_0$,
the invertible sheaf
\begin{equation*}
\sheaf{L}_1 := \OO_{\pi_n}(1) \otimes g_{n-1}^* \OO_{\pi_{n-1}}(a_{n-1}) \otimes \ldots \otimes g_0^* \OO_{X}(a_0)
\end{equation*}
is very ample for $f[n]$ \cite[8.1.22]{Liu}. Here $g_i$ denotes the obvious map
from $X[n]$ to $X[i]$ (with $X[0] := X$).

For each $i$, we define a $G[i]$-linearization on $\OO_{\pi_i}(n+1)$ by
\begin{equation*}
u_i^{\ell} v_i^{n+1-\ell} \mapsto \sigma_i^{i-(n+1)+\ell} u_i^{\ell} v_i^{n+1-\ell}
\end{equation*}
for $0 \leq \ell \leq n+1$, where the monomials are viewed as sections of
$\OO_{\pi_i}(n+1)$. This induces canonically a $G[n]$-linearization on
$\sheaf{L}_1^{n+1}$. Linearizing $\sheaf{L}_1^{n+1}$ rather than $\sheaf{L}_1$
is quite crucial; it will yield a better semi-stable locus in the Hilbert
scheme later on. We need also to take a suitable $N$-th power of this sheaf,
thus in the end we put $\sheaf{L} := \sheaf{L}_1^{(n+1)N}$. (The integers $a_0,
\ldots, a_{n-1}$ and $N$ play only a formal role.)

Consider finally $\mathcal{H}[n] := \Hilb^n _{X[n]/\AA^{n+1}}$, and its universal family
\begin{equation*}
\mathcal{Z} \subset \mathcal{H}[n] \times_{\AA^{n+1}} X[n].
\end{equation*}
For $N$ sufficiently large, the line bundle
$\sheaf{M} := \det p_{1*}\res{(p_2^* \sheaf{L})}{\mathcal{Z}}$ on $\mathcal{H}[n]$
is relatively very ample \cite[Prop.~2.2.5]{HuLe} and inherits a linearization
from $\sheaf{L}$. Proposition \ref{prop:proj} now asserts that the map between
the good quotients
\begin{equation*}
\mathcal{H}^n_{X/\AA^1} := \mathcal{H}[n]^{ss}(\sheaf{M})/G[n] \to \AA^{n+1}/G[n] = \AA^1
\end{equation*}
is \emph{projective}. This is our compactification of $
\Hilb^n_{X^{\mathrm{sm}}/\AA^1} \to \AA^1 $.

Even though the global structure of $\sheaf{M}$ might be complicated, we can
control the fibres of $\sheaf{M}$ sufficiently well in order to apply Theorem
\ref{theo:main}. As we shall see below, this will yield a neat description of
the (semi-)stable locus. Before we formulate this, we need to discuss the
fibres of $f[n]$.

The fibre $X[n]_0$ over the origin in $\AA^{n+1}$ forms a transversal chain
\begin{equation*}
Y \cong \Delta_0 \cup \ldots \cup \Delta_i \cup \ldots \cup \Delta_{n+1} \cong Y',
\end{equation*}
where $\Delta_i \cong \PP^1$ for $1 \leq i \leq n$. The torus $G[n]$ acts
trivially on $\Delta_0$ and $\Delta_{n+1}$, and otherwise scales a coordinate
of $\Delta_i$ with $\sigma_i$, keeping the intersection points fixed.
Generally, if $q$ belongs to the stratum where $t_i$ is zero precisely for the
indices $i_1 < \ldots < i_r$, then $X[n]_q$ has the form
\begin{equation*}
Y \cong \Delta_{I_0} \cup \ldots \cup \Delta_{I_k} \cup \ldots \cup \Delta_{I_r} \cong Y'
\end{equation*}
where $I_k = \{i_k, \ldots, i_{k+1}-1\}$ and $\Delta_{I_k}$ is a smoothing of
the ``sub-chain'' $\Delta_{i_k} \cup \ldots \cup \Delta_{i_{k+1}-1}$ as the
parameters indexed by $I_k$ become nonzero. We write $\Delta_{I_k}^{\circ}$ for
the component $\Delta_{I_k}$ with the double locus removed.

Analysing the stability criterion we arrive at the following description of the
semi-stable locus:

$\bullet$ A subscheme $Z \subset X[n]_q $ of length $n$ is semi-stable if and only if
\begin{equation*}
\mathit{length}(Z \cap \Delta_{I_k}^{\circ}) = \mathit{cardinality}(I_k \cap \{1, \ldots, n\} )
\end{equation*}
holds for every irreducible component $\Delta_{I_k}$ of $X[n]_q$. Moreover, any
semi-stable point is also stable (for details on the case $n=2$ see below).

Note, in particular, that a semi-stable $Z$ does not meet the singular locus of
$X[n]_q$. Thus the composition $\mathcal{H}[n]^\sstab \to \AA^{n+1} \to \AA^1$
consists of a smooth map followed by the multiplication
$t \mapsto t_1 \cdot \ldots \cdot t_{n+1}$. One easily deduces that
$\mathcal{H}^n_{X/\AA^1} \to \AA^1$ is, up to finite quotient singularities, a
semi-stable degeneration. Alternatively; the stack quotient
$[\mathcal{H}[n]^\sstab/G[n]] \to \AA^1 $ is a proper, semi-stable
degeneration. Note also that since
$\mathcal{H}[n]^\sstab = \mathcal{H}[n]^\stab$, the quotient
$\mathcal{H}^n_{X/\AA^{1}}$ is an orbit space.

We note that the characterization of GIT stability in $\mathcal{H}[n]$ we have
deduced using the numerical criterion, essentially coincides with the
definition of admissibility in \cite{LiWu}. Also, properness of the quotient
map is in our case an immediate consequence of Proposition \ref{prop:proj}.

We now discuss the case $n=2$ in some detail, to illustrate the results
mentioned above. We start by making a key observation. Let $Z$ be a length $2$
subscheme in a fibre of $f[2]$ and let $\lambda$ be a $1$-PS for which the
limit $Z_0$ of $Z$ exists. If $Z_0 = \{p_0,p_0'\} $ is reduced, then
\begin{equation*}
\sheaf{M}(Z_0) \cong \sheaf{L}(p_0) \otimes \sheaf{L}(p_0'),
\end{equation*}
thus the $\Gm$-weight on $\sheaf{M}(Z_0)$ can be computed from the weights on
$\sheaf{L}$ at $p_0$ and $p_0'$. If $Z_0$ is supported in a single point $p_0$,
it is not true that $\sheaf{M}(Z_0) \cong \sheaf{L}(p_0)^{\otimes 2}$. However,
one can show that the error term one gets when replacing $\mu(\lambda,Z)$ by
twice the $\Gm$-weight on $\sheaf{L}(p_0)$ becomes negligible when $N \gg 0$.
Thus, in our computations, it suffices to consider formal sums of points in the
fibres of $f[2]$.

For $Z \subset X[2]_0$ this can be detailed as follows. Let $\lambda$ be a
$1$-PS of $G[2]$ corresponding to $(s_1,s_2) \in \ZZ^2 $. Fixing a coordinate
$\sigma$ for $\lambda$, the action in the $\PP^1$-fibres is
$(\sigma^{s_i}u_i:v_i)$. Let $p \in \mathrm{Supp}(Z)$ and put
$p_0 := \lim_{\sigma \to 0} \lambda(\sigma) \cdot p$. If
$p \in \Delta_j^{\circ}$ where $j \in \{0,3\}$, we have $p = p_0$. If however
$j \in \{ 1,2 \}$, both $u_j$ and $v_j$ are nonzero at $p$, and easy
computations show that $u_j = 0$ at $p_0$ if $s_j > 0$ and that $v_j = 0$ at
$p_0$ if $s_j < 0$.

Now we will compute the $\Gm$-weight $\mu(\lambda, p)$ on $\sheaf{L}(p_0)$.
Assume first that $p \in \Delta_1^{\circ}$. If $s_1 > 0$, then $\sheaf{L}(p_0)$
is generated by $(v_1^{3a_1} v_2^{3})^N$. If instead $s_1 < 0$, then
$\sheaf{L}(p_0)$ is generated by $(u_1^{3a_1}v_2^{3})^N$. In any case, we find
\begin{equation*}
\mu(\lambda, p) = N(-a_1 (1/2 s_1 - 3/2 \vert s_1 \vert ) - s_2).
\end{equation*}
Assuming instead that $p \in \Delta_2^{\circ}$, a similar computation yields
\begin{equation*}
\mu(\lambda, p) = N( a_1 s_1 + 1/2 s_2 - 3/2 \vert s_2 \vert).
\end{equation*}
Hence, for $Z = \{p_1, p_2 \}$ where $p_j \in \Delta_j^{\circ}$, we get
\begin{equation*}
\mu(\lambda,Z) = N(a_1(1/2 s_1 - 3/2 \vert s_1 \vert) - (1/2 s_2 + 3/2 \vert s_2 \vert)).
\end{equation*}
In particular, we find that $\mu(\lambda,Z) \leq 0$ with equality if and only
if $s_1 = s_2 = 0$, i.e., if $\lambda$ is trivial. This means that $Z$ is
stable.

Lastly, let us say a few words about the geometry of
$\mathcal{H}^2_{X/\AA^1} \to \AA^1$. Over the stratum in $\AA^3$ where
$t_1 = t_3 = 0$ and $t_2 \neq 0$, the fibres of $f[2]$ have a component
$\Delta_{ \{ 1,2 \} }$, where $\Delta_{ \{1,2\} }^{\circ} \cong \Gm$ and where
the diagonal torus in $G[2]$ acts by $w \mapsto \sigma \cdot w$ for a
coordinate $w$ of $\Gm$. Thus, the subschemes
$\{ p, -p \} \subset \Delta_{ \{1,2\} }^{\circ}$ have stabiliser $ \mu_2 $ and
one can check that the action is free outside this locus. This yields an
isolated $A_1$ singularity in $\mathcal{H}^2_{X/\AA^1}$. Our family degenerates
$\PP^2$ into a union of rational surfaces $H_{20}$, $H_{11}$ and $H_{02}$,
where the generic point on $H_{ij}$ corresponds to $i$ points on $Y^{\circ}$
and $j$ points on $(Y')^{\circ}$. One easily checks that the $H_{ij}$-s
intersect pairwise in an irreducible curve, with triple intersection a point,
i.e., the dual graph forms a $2$-simplex. For instance, to describe
$H_{2,0} \cap H_{0,2}$, note that both $\Delta_{\{0,1,2\}} \cup \Delta_3$ and
$\Delta_0 \cup \Delta_{\{1,2,3\}}$ are smoothings of
$\Delta_0 \cup \Delta_{\{1,2\}} \cup \Delta_3$ as $t_1$, resp.~$t_3$, becomes
nonzero. By our description of the (semi-)stable locus, this means that
$H_{20}$ and $H_{02}$ intersect precisely in the closure of the locus
consisting of stable subschemes of $\Delta_0 \cup \Delta_{\{1,2\}} \cup
\Delta_3$, modulo the action of the diagonal torus in $G[2]$.

\bibliographystyle{numsty}

\end{document}